\newtheorem{theorem}{Theorem}[section]
\newtheorem{lemma}[theorem]{Lemma}
\newtheorem{proposition}[theorem]{Proposition}
\newtheorem{corollary}[theorem]{Corollary}
\theoremstyle{definition}
\newtheorem{definition}[theorem]{Definition}
\newtheorem{example}[theorem]{Example}
\theoremstyle{remark}
\newtheorem{remark}[theorem]{Remark}
\def\WWW{\Omega\hskip-1.50ex\Omega}
\newcommand{\be}{\begin{equation}}
\newcommand{\ee}{\end{equation}}
\newcommand{\bee}{\begin{equation*}}
\newcommand{\eee}{\end{equation*}}
\newcommand{\ba}{\begin{aligned}}
\newcommand{\ea}{\end{aligned}}
\newcommand{\sI}{\mathcal{I}}
\newcommand{\sQ}{\mathcal{Q}}
\newcommand{\sX}{\mathcal{X}}
\newcommand{\sS}{\mathcal{S}}
\newcommand{\sL}{\mathcal{L}}
\newcommand{\sY}{\mathcal{Y}}
\numberwithin{equation}{section}
\begin{document}

\title{Shifted symplectic structure on Poisson Lie algebroid and generalized complex geometry}

%    Information for first author
\author{Yingdi Qin}
%    Address of record for the research reported here
\address{Institute of Mathematics and Informatics, Bulgarian Academy of Sciences
Bulgaria, Sofia 1113, Acad. G. Bonchev Str., Bl. 8}
%    Current address
%\curraddr{Department of Mathematics and Statistics,
%Case Western Reserve University, Cleveland, Ohio 43403}
\email{yqin@math.bas.bg}
%    \thanks will become a 1st page footnote.
\thanks{Supported by the Bulgarian Ministry of Education and Science, Scientific Programme "Enhancing the Research Capacity in Mathematical Sciences (PIKOM)", No. DO1-67/05.05.2022}

%    General info
%\subjclass[2000]{Primary 54C40, 14E20; Secondary 46E25, 20C20}

%\date{Oct 23, 2022.}

%\dedicatory{This paper is dedicated to our advisors.}

%\keywords{Generalized complex geometry, Coisotropic brane, Lie algebroid, Homotopy holomorphic structure, Derived stack}

\begin{abstract}
Generalized complex geometry was classically formulated by the language of differential geometry. In this paper, we reformulated a generalized complex manifold as a holomorphic symplectic differentiable formal stack in a homotopical sense. Meanwhile, by developing the machinery for shifted symplectic formal stack, we prove that the coisotropic intersection inherits shifted Poisson structure. Generalized complex branes are also studied.
\end{abstract}

\maketitle

\tableofcontents

\section{Introduction}
This paper is the first in a sequence on geometrically understanding the full complexity  of Fukaya categories of symplectic manifolds via a Lie algebroid (groupoid) approach to generalized geometry, and homotopy holomorphic structures on derived differentiable  stacks. 

Kontsevich's celebrated Homological Mirror Symmetry (HMS) conjecture asserts that the idempotent complete Fukaya Category of a symplectic manifold is derived equivalent to the category of coherent sheaves on a mirror  complex manifold. HMS provides a mathematical explanation to the mysterious Mirror Symmetry duality in quantum field theory. In the usual construction, the Fukaya category of a symplectic manifold is built in two stages. The construction starts with a geometrically constructed category with objects Lagrangian branes (Lagrangian submanifold equipped with a flat connection), morphisms given by  Lagrangian intersections, and compositions and higher associativity corrections defined by a disk instanton quantization procedure. The actual Fukaya category is then obtained as the split closure of the geometric category.

In general, the geometric category of Lagrangian branes is too small and the split closure process adds in a universal way the additional objects needed for  homological mirror symmetry to hold. Finding a geometric incarnation of these extra objects is a challenging problem that has been around for over twenty years. Analyzing the equations of motion in the requisite quantum field theories, Kapustin and Orlov\cite{KO} proposed that the missing objects should be realized as coisotropic branes - certain coisotropic submanifolds equipped with further structures. However, it is hard to define an appropriate Fukaya category including coisotropic branes since coisotropic branes behave quite differently from Lagrangian branes.

In \cite{Qc}, coisotropic branes in the Fukaya category of a symplectic torus is studied by using a doubling construction which turns a coisotropic brane into a Lagrangian brane on the doubled symplectic torus. Roughly speaking, the doubled torus of a linear symplectic torus $T$ is given by the product of $T$ with its dual torus $T^{\vee}$ with a $B$-field twisted symplectic structure. Every coisotropic brane can be lifted into a Lagrangian brane on the doubled torus.

To generalize this doubling construction to an arbitrary symplectic manifold $X$, a natural approach will be to consider a formal doubling, i.e. replace $X$ by a version of the cotangent bundle of $X$ in which the cotangent directions are purely formal. A convenient way to implement this in practice is to take the shifted cotangent bundle $T^{*}X[1]$ of the symplectic manifold $X$ as the formal replacement for the doubled torus, and to take the shifted conormal bundle $N^*S[1]$ of a coisotropic submanifold $S$ as the formal replacement for the Lagrangian lift of a coisotropic brane. This can be further applied to generalized complex geometry (which generalizes both symplectic geometry and complex geometry) and generalized complex branes (which generalize coisotropic branes on symplectic manifolds) through a Lie algebroid approach.

In this paper, we showed that a generalized complex manifold $X$ produces a homotopy holomorphic 1-shifted symplectic structure on a derived differentiable stack which is a deformation of $T^{*}X[1]$, and that a generalized complex submanifold in $X$ produces a holomorphic Lagrangian substack. As a corollary, the derived intersection of two substacks is a holomorphic symplectic substack. We may use this to study the intersection theory of two generalized complex submanifolds.  

We also prove that derived intersection of $n$-shifted coisotropic inherits a $n-1$-shifted Possion structure. This is an anologue of \cite{PTVV}'s theorem on derived Lagrangian intersection. 

The algebraic/analytic theory of shifted symplectic structures is well developed \cite{PTVV}. However, the homotopy holomorphic geometry of differentiable stacks  mentioned above is a new phenomenon in stacky  and derived geometry.  I developed a Lie algebroid and groupoid machinery which allows one to codify these novel structures, and used it to show that a homotopy holomorphic stack admits an anti-holomorphic foliation analogous  to the usual complex geometry. Hence, one can  define holomorphic sheaves on the  homotopy holomorphic stack to be the modules over this anti-holomorphic foliation.

\section{Coisotropic branes on tori}

A coisotropic brane of a symplectic manifold $(X,\,\omega)$ is a coisotropic submanifold $C$ with a $U(1)$ connection whose curvature form $F$ vanishes on the isotropic leaves in $C$ and $\omega^{-1}F$ induces a transverse complex structure on $C$. Lagrangian branes are in particular coisotropic branes whose underlying submanifolds are Lagrangian.

In \cite{Qc} I proposed a new method to extend the geometric Fukaya category of  a linear symplectic torus $(T,\,\omega)$, to include coisotropic submanifolds alongside linear Lagrangian submanifolds as objects of the extended category. The approach is by considering a twisted doubling torus $T\times T^\vee$ of $T$ and lifting  coisotropic branes into Lagrangians of $T\times T^\vee$.

\begin{example}
	Let $(T=\mathbf{R}^4/\mathbf{Z}^4,\, \omega=dr_1\wedge d\theta_1 + dr_2\wedge d\theta_2)$ be the standard symplectic four torus. 
	Then $(C=T,\,\nabla=d+2\pi ir_1d\theta_2-2\pi ir_2d\theta_1)$ is a coisotropic brane. And the induced complex stucture has complex coordinates $r_1-ir_2$, $\theta_1+i\theta_2$.
	
	The twisted double torus is  $T\times T^\vee=\mathbb{R}^4/\mathbb{Z}^4\times \mathbb{R}^4/\mathbb{Z}^4$, with the symplectic structure  $ \frac{1}{2}\omega\oplus-\frac{1}{2}\omega^{-1}=\frac{1}{2}(dr_1\wedge d\theta_1+dr_2\wedge d\theta_2+d\hat{r}_1\wedge d\hat{\theta}_1+d\hat{r}_2\wedge d\hat{\theta}_2)$. The lift of the above coisotropic brane is $(\{(r_1,\theta_1,r_2,\theta_2,\hat{r}_1,\hat{\theta}_1,\hat{r}_2,\hat{\theta}_2)\in T\times T^\vee: \hat{\theta}=r_2, \hat{\theta}_2=-r_1,\hat{r}_1=\theta_2,\hat{r}_2=-\theta \},\,\nabla=d+2\pi ir_1d\theta_2-2\pi ir_2d\theta_1)$.
\end{example}

This construction extends to general linear coisotropic branes on tori and leads to the following
\begin{proposition}[\cite{Qc}] \label{LiftisLag}\quad
	\begin{enumerate}
		\item The lift of a coisotropic linear object on $T$ is a Lagrangian submanifold of $T\times T^\vee$.
		\item The lift is a complex submanifold with respect to a canonical complex structure on $T\times T^\vee$.
	\end{enumerate}
\end{proposition}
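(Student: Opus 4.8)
\section*{Proof proposal for Proposition \ref{LiftisLag}}

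The plan is to reduce both assertions to linear algebra on the symplectic vector space underlying $T$ and then verify the two conditions by a direct computation that keeps careful track of the curvature of $\nabla$. Write $T=V/\Lambda$ with $\omega$ a constant symplectic form on $V$, so that $T^\vee=V^*/\Lambda^*$ and the tangent space of $T\times T^\vee$ at every point is canonically $V\oplus V^*$, on which the doubled form is $\Omega=\tfrac12\omega-\tfrac12\omega^{-1}$. Since a linear coisotropic brane and its lift are translation invariant, it suffices to show that the constant subspace $L\subset V\oplus V^*$ modelling the lift is Lagrangian for $\Omega$ and invariant under the relevant complex structure. First I would record the explicit description of $L$: writing $C\subset V$ for the coisotropic subspace, $C^\omega\subset C$ for its isotropic leaf directions, and $F\in\wedge^2 C^*$ for the constant curvature of $\nabla$, the subspace $L$ is the graph over $C$ of the map determined by $F$, enlarged in the codirections by the annihilator $\mathrm{Ann}(C)\subset V^*$. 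With $\dim V=2n$ and $\dim C=n+k$ this gives $\dim L=(n+k)+(n-k)=2n$, exactly half of $\dim(V\oplus V^*)$, matching the worked example in which $C=T$, $\mathrm{Ann}(C)=0$, and $L$ is a graph of dimension $4$.

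For part (1) I would compute $\Omega|_L$ directly. The crucial linear fact is that $\omega^{-1}\bigl(\mathrm{Ann}(C)\bigr)=C^\omega\subset C$, so on the directions of $L$ coming from $\mathrm{Ann}(C)$ both $\omega|_C$ and the pairing against $C$ degenerate; these codirections are therefore $\Omega$-orthogonal to everything in $L$, provided the graph map sends $C$ into $\omega(C)$. Using that $F$ vanishes on $C^\omega$, the graph directions over $C^\omega$ lie in the kernel of $\Omega|_L$, so $\Omega|_L$ descends to the transverse quotient $C/C^\omega$. There it equals $\tfrac12\omega-\tfrac12\omega^{-1}$ pulled back along the graph of $F$, and the Lagrangian condition reduces to a single compatibility identity relating $F$, $\omega$ and $\omega^{-1}$ on $C/C^\omega$. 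Combined with the dimension count, this yields that $L$ is Lagrangian.

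For part (2) I would first pin down the canonical complex structure $J$ on $T\times T^\vee$: using $\omega$ to identify $V\cong V^*$ produces a complex structure on $V\oplus V^*$ that rotates the base directions into the fibre directions, and I would check that $J$ is $\Omega$-compatible and translation invariant, hence descends to $T\times T^\vee$. The claim then is that $L$ is $J$-invariant. The key point is that, under the graph description, $J|_L$ is intertwined with the transverse almost complex structure $\omega^{-1}F$ on $C/C^\omega$ supplied by the brane data: the complex coordinates on $L$ are precisely the pullbacks of the complex coordinates induced by $\omega^{-1}F$, as in the example where they are $r_1-ir_2$ and $\theta_1+i\theta_2$. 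Thus $J$-invariance of $L$ is equivalent to the statement that $\omega^{-1}F$ squares to $-\mathrm{id}$ on $C/C^\omega$, which is exactly the defining condition that $(C,\nabla)$ be a coisotropic brane.

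The main obstacle, and the step on which I would spend the most care, is making the passage between the curvature $F$ and the embedding precise. One has to choose the correct identifications, via $\omega$ and $\omega^{-1}$, of the annihilator and graph directions inside $V^*$, and then show that the single linear-algebraic relation governing the lift simultaneously produces the vanishing of $\Omega|_L$ and the $J$-invariance of $L$. Once the lift map is written in these coordinates, both parts collapse onto the same compatibility identity among $\omega$, $F$ and $\omega^{-1}F$, so the remaining work is bookkeeping of signs and of the factors of $\tfrac12$ rather than any conceptual difficulty; the worked example fixes every convention and serves as a consistency check.
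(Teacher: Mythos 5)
Your proposal is sound: the paper states Proposition \ref{LiftisLag} as an import from \cite{Qc} without giving its own proof, so the only internal check available is the worked example, and your linear-algebra reduction is consistent with it --- the description of the lift as the graph of the curvature over $C$ enlarged by $\mathrm{Ann}(C)$, the fact $\omega^{-1}(\mathrm{Ann}(C))=C^\omega$, and the collapse of both the Lagrangian condition and the $J$-invariance onto $(\omega^{-1}F)^2=-\mathrm{id}$ on $C/C^\omega$ all check out (your proviso that the graph directions land in $\omega^\flat(C)$ follows in one line from coisotropicity, since any $\xi$ with $\xi|_C=\iota_xF$ satisfies $\omega^{-1}\xi\in C$ once $F$ descends to $C/C^\omega$). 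This is essentially the same route the cited reference takes for linear branes, so no further comparison is needed.
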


The Lagrangian Floer theory of $T$ is naturally related to the Lagrangian Floer theory of the twisted doubling torus $T\times T^\vee$ of $T$. However, the morphism spaces that we want to consider are only certain subspaces of the Floer cohomology in doubled  torus, which we call the ``u-part".
The main result, informally, is that the Floer cohomology of two objects in $T$ is isomorphic to the ``u-part" Floer cohomology of their lifts in $T\times T^\vee$ and that this isomorphism respects the Floer products.

\begin{theorem}[Qin \cite{Qc}]
	For a pair of Lagrangian branes $L,\,L'$ which are mirror to a pair of line bundles $\mathcal{L},\,\mathcal{L'}$, let $\boldsymbol{L},\,\boldsymbol{L}'$ be the lifts in double torus. Suppose $\mathcal{L'}\otimes \mathcal{L}^{-1}$ is ample. Then the ``u-part" Floer cohomology $HF_u^*(\boldsymbol{L},\boldsymbol{L}')$ is isomorphic to $HF^*(L,\,L')$. And for two such pairs $L,L'$ and $L',L''$, the following diagram commutes
	$$\begin{tikzcd}
	{HF^*(L',\,L'')\otimes HF^*(L,\,L')} \arrow[rr] \arrow[d, "\cong"] &  & {HF^*(L,\,L'')} \arrow[d, "\cong"] \\
	HF^*_u(\boldsymbol{L}',\,\boldsymbol{L}'')\otimes HF^*_u(\boldsymbol{L},\,\boldsymbol{L}') \arrow[rr]                                 &  & HF_u(\boldsymbol{L},\boldsymbol{L}'').
	\end{tikzcd}$$
\end{theorem}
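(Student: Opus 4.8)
The plan is to reduce everything to an explicit computation on the linear torus, exploiting the fact that all Lagrangians involved are affine subtori and all Floer-theoretic structure maps are governed by theta-function sums. First I would fix linear coordinates on $T=\mathbb{R}^{2n}/\mathbb{Z}^{2n}$ and write down the defining equations of the lifts $\boldsymbol{L},\boldsymbol{L}'\subset T\times T^{\vee}$ following the recipe of Proposition \ref{LiftisLag}, so that the dual-torus coordinates are expressed as explicit affine functions of the base coordinates. From these equations I can describe the intersection $\boldsymbol{L}\cap\boldsymbol{L}'$ and exhibit the decomposition of the Floer complex coming from the extra $T^{\vee}$-directions; the \emph{u-part} is then the summand selected by the distinguished weight in this decomposition, i.e. the span of those intersection points whose dual-torus component is pinned to the value dictated by the original intersection $L\cap L'$. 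The first thing to verify is that this summand is a genuine subcomplex.

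Second, I would argue that the Floer differential vanishes on both sides for index reasons: since the Lagrangians are affine and transverse, the relevant moduli of Maslov-index-one strips either carry a free translation action in the flat directions or are otherwise obstructed, so $HF^{*}=CF^{*}$ and the isomorphism in part (a) reduces to a bijection of generators. I would produce this bijection directly from the coordinate description, showing that the projection $T\times T^{\vee}\to T$ restricts to a bijection from the u-part generators onto $L\cap L'$. The ampleness hypothesis on $\mathcal{L}'\otimes\mathcal{L}^{-1}$ enters here to guarantee, via mirror symmetry for the torus, that $\dim HF^{*}(L,L')=\dim\mathrm{Hom}(\mathcal{L},\mathcal{L}')$ is concentrated in a single degree and matches the number of u-part generators, so that no spurious classes survive.

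Third, for the compatibility of products I would compute both $\mu^{2}$ maps by counting holomorphic triangles. On a linear torus these counts are Gaussian sums whose exponents are the symplectic areas of the affine triangles, reproducing the classical theta-function multiplication rule. The key observation is that the twisted symplectic form $\tfrac12\omega\oplus-\tfrac12\omega^{-1}$ upstairs rescales triangle areas, but once one restricts to u-part generators the contributions from the $T^{\vee}$-directions combine into exactly the downstairs area, so the two Gaussian sums agree term by term; this is precisely what makes the square commute.

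The hard part will be this third step: controlling the holomorphic-triangle count in the doubled torus and showing that the $B$-field–twisted areas restrict correctly to the u-part. I expect the main obstacle to be the bookkeeping of triangle homotopy classes upstairs versus downstairs, together with verifying that ampleness of $\mathcal{L}'\otimes\mathcal{L}^{-1}$ makes the relevant quadratic form positive-definite, so that the Gaussian sums converge and the identification of products holds term by term rather than only formally.
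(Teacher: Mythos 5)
A preliminary caveat: the present paper does not prove this theorem at all --- it is imported verbatim from \cite{Qc} as background for the doubling construction --- so your proposal can only be measured against what any proof must supply, not against an in-text argument. Your general toolkit (linear Lagrangians, vanishing differentials for affine intersections, theta-function triangle counts) is the right family of techniques for this statement, but the proposal has a gap at its foundation: you invent the definition of the ``u-part'' rather than work with the one fixed in \cite{Qc}, and your guess --- the span of those upstairs intersection points whose $T^{\vee}$-component is ``pinned to the value dictated by $L\cap L'$'' --- is not well defined, and would not work even if it were. Over each $p\in L\cap L'$ the intersection $\boldsymbol{L}\cap\boldsymbol{L}'$ contains an entire finite set of points with no distinguished element: for $L=\{y=0\}$ and $L'=\{y=nx\}$ in $T=\mathbb{R}^{2}/\mathbb{Z}^{2}$, the lifts are cut out by $\{y=0,\ \hat{x}\in\mathbb{Z}\}$ and $\{y=nx,\ \hat{x}+n\hat{y}\in\mathbb{Z}\}$, so over each of the $n$ base points there sit $n$ dual points $\hat{y}\in\{0,1/n,\dots,(n-1)/n\}$, all on equal footing ($n^{2}$ generators upstairs versus $n$ downstairs). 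Worse, the span of any pinned subset of generators is not closed under the triangle product: holomorphic triangles in $T\times T^{\vee}$ split as products of triangles in the two factors, and the $T^{\vee}$-factor triangles connect any pair of input dual components to \emph{all} output dual components, so neither your step 1 (that the summand is a subcomplex) nor your step 3 can go through with this definition. A workable u-part must be built from distinguished linear combinations summing over the dual components, which is precisely what makes the rank come out to $n$ and the product close.

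The second gap is the claimed term-by-term matching of Gaussian sums. The doubled torus carries $\frac{1}{2}\omega\oplus-\frac{1}{2}\omega^{-1}$, so a single split triangle upstairs contributes only \emph{half} of the downstairs $\omega$-area in its exponent, together with a dual-torus area governed by $\omega^{-1}$; no individual upstairs term can equal an individual downstairs term. The identification of structure constants necessarily proceeds by summing over the extra dual intersection points and the dual-direction triangle classes and then invoking Poisson summation (equivalently, the modular transformation of theta functions), so that the $\omega^{-1}$-governed sums resum into the missing half of the $\omega$-areas. That resummation is the analytic heart of the theorem --- and it is also the reason the u-part must consist of averaged generators rather than pinned ones --- yet it is exactly the step your proposal gestures at (``combine into exactly the downstairs area \dots\ term by term'') without supplying. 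Ampleness does play the role you assign it (degree concentration of $HF^{*}(L,L')$ and convergence of the relevant theta series), but it cannot repair either of these two missing ingredients.
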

 By Theorem \ref{LiftisLag}, a coisotropic submanifold of $T$ lifts to a Lagrangian submanifold of the doubled torus which could be studied using Lagrangian Floer theory. Thus, the enlarged Fukaya category including coisotropic branes is realized as a non-full subcategory of the Fukaya category of the doubled torus. 

\section{Poisson manifold and 1-shifted symplectic stack}

\subsection{Lie algebroid and formal stack}

\begin{definition}
    A Lie algebroid over a manifold $X$ is a vector bundle $A\rightarrow X$, whose space of smooth sections is equipped with a Lie bracket which is compatible with the Lie bracket of tangent vector fields by an anchor map $a:A\rightarrow T_X$.
\end{definition}
 The importance of this concept lies in that quotient stacks by Lie algebroids (or $L_\infty$ algebroids) model (derived) formal neighborhoods of manifolds in stacks (or higher stacks). In fact, given a map $f: X\rightarrow \sX$ from a manifold $X$ to a stack (or a higher stack) $\sX$, the relative tangent complex $T_{X/\sX}$ is natually a Lie algebroid (or $L_\infty$ algebroids) over $X$. And the map $f$ natually factors as $X\rightarrow [X/T_{X/\sX}] \rightarrow \sX$, where the quotient stack $[X/T_{X/\sX}]$ is canonically identified with the (derived) formal neighborhood of $X$ in $\sX$.

 \begin{example}
     \begin{enumerate}
         \item The map $X\rightarrow pt$ factors through the De Rham stack $[X/T_X]$.
         \item A regular embedding of manifolds $Y\rightarrow X$ factors through the formal neighborhood $\Hat{X}_Y=[Y/T_{Y/X}]$ of $Y$ in $X$. 
         \item A princeple $G$-bundle $P$ on $X$ is represented by a map $X\rightarrow BG$. The relative tangent bundle is the Atiyah algebroid $At(P)$ of the princeple $G$-bundle which fits into the exact sequence
         $$0\rightarrow \mathfrak{g}\rightarrow At(P)\rightarrow T_X\rightarrow 0.$$
     \end{enumerate}
 \end{example}

\subsection{Poisson manifold and 1-shifted symplectic stack}
\begin{definition}
    Given a Poisson manifold $(X,P\in \Gamma(X, \Lambda^2 T_X))$, the \textbf{Poisson Lie algebroid} is the cotangent bundle $T^*_X \rightarrow X$ equipped with
    \begin{itemize}
        \item the anchor map 
        $P:T^*_X \rightarrow T_X$ being contraction with $P\in \Gamma(X, \Lambda^2 T_X)$. 
        \item the Lie bracket $[-,-]:\Gamma(X, T_X) \times \Gamma(X, T_X) \rightarrow \Gamma(X, T_X)$ being $[\alpha,\beta]=L_{P\alpha}\beta -\iota_{P\beta}d\alpha$.
    \end{itemize}
\end{definition}

The quotient stack $[X/T^*_X]$ by the Poisson Lie algebroid admits an $1$-shifted symplectic structure such that the quotient map $X \rightarrow [X/T^*_X]$ is Lagrangian with respect to the trivial isotropic structure.

Let $\Omega_\sX^{p,q}$ denote the space of $p$-forms of cohomological degree $q$ on a stack $\sX$.

\begin{definition}
    Given a stack $\sX$, an $n$-shifted symplectic structure on $\sX$ is 
    $$\omega=(\omega_0,\omega_1,\cdots) \in \prod_{p\geq 0} \Omega_\sX^{2+p,n-p}$$
    such that
    
\begin{itemize}
    \item $d \omega_{p-1}+\delta \omega_{p}=0$ for every $p\geq 0$ (by default $\omega_{-1}=0$).
    \item $\omega_0$ is non degenerate in the sense that it induces an equivalence 
    $\omega^\flat_0: \mathds{T}_\sX \rightarrow \mathds{T}^\vee_\sX[n]$. 
\end{itemize}
Here $d$ is the De Rham differential and $\delta$ is the internal differential.
\end{definition}

\begin{definition}[\cite{PTVV}]
    Let $(\sX,\omega)$ be a $n$-shifted symplectic stack, and a map between stacks $f:\sY \rightarrow (\sX,\omega)$.
    \begin{itemize}
        \item An isotropic structure on $f$ is a  element $\gamma \in \prod_{p\geq 0} \Omega_\sY^{2+p,n-1-p}$ such that $(d+\delta)\gamma= f^*\omega$.
        \item An Lagrangian structure on $f$ is an isotropic structure $\gamma$ such that the induced map $\gamma^\flat: \mathds{T}_{\sY/\sX}\rightarrow \mathds{T}^\vee_{\sY}[n-1]$ is an equivalence.
    \end{itemize}
    The map $\gamma^\flat: \mathds{T}_{\sY/\sX}\rightarrow \mathds{T}^\vee_{\sY}[n-1]$ is constructed as follows. Consider the following sequence of maps
    $$\mathds{T}_{\sY/\sX}
    \xrightarrow{\iota} \mathds{T}_{\sY}
    \xrightarrow{df} f^*\mathds{T}_{\sX}
    \xrightarrow{f^*\omega_0^\flat} f^*\mathds{T}^\vee_\sX[n] \xrightarrow{df^\vee} \mathds{T}^\vee_\sY[n].$$
    We can construct two null-homotopies for the composition of these maps $df^\vee \circ f^*\omega_0^\flat \circ df \circ \iota: \mathds{T}_{\sY/\sX}\rightarrow \mathds{T}_\sY^\vee[n]$. The first null-homotopy is given by the composition $df^\vee \circ\gamma_0^\flat \circ df \circ \iota :\mathds{T}_{\sY/\sX}\rightarrow \mathds{T}_\sY^\vee[n-1] $ since $\delta \gamma_0 =\omega_0$. The second null-homotopy comes from the canonical null-homotopy of $\mathds{T}_{\sY/\sX}
    \xrightarrow{\iota} \mathds{T}_{\sY}
    \xrightarrow{df} f^*\mathds{T}_{\sX}$ post-composing with $df^\vee \circ f^*\omega_0^\flat$.
    Taking the difference between the two null-homotopies, we get the map $\gamma^\flat :\mathds{T}_{\sY/\sX}\rightarrow \mathds{T}^\vee_{\sY}[n-1]$.
\end{definition}

\begin{example}\label{Poisson is Lag}
    Let $(X,P\in \Gamma(X, \Lambda^2 T_X))$ be a Poisson manifold and $\sX=[X/A]$ be the corresponding quotient stack of $X$ by its Poisson Lie algebroid $A=T^\vee_X$. The canonical closed $2$-form $(\WWW,0,0,\cdots) \in \prod_{p\geq 0} \Omega_\sX^{2+p,1-p}$ is an $1$-shifted symplectic structure on $\sX$ where 
    
    $$\WWW=d_{DR}: \Gamma(X,A) \rightarrow \Gamma(X,\wedge^2 T^\vee_X) \quad\textit{with symbol} \quad \sigma_{\WWW}=\mathds{1} : A \rightarrow T^\vee_X.$$

    The quotient map $q:X\rightarrow \sX$ admits a canonical Lagrangian structure $\gamma_q=(0,0,\cdots) \in \prod_{p\geq 0} \Omega_\sY^{2+p,-p}$.
    Indeed, the relative tangent complex of $q$ is the two term complex $T_X\oplus A \xrightarrow{(1,-P)} T_X$ sitting in degree $0,1$. The induced map
$\gamma_q^\flat:\mathds{T}_{X/\sX}\rightarrow T^\vee_X$ is the following horizontal map
    \begin{equation*}
    \begin{tikzcd}
T_X \arrow[rr]                                       &  & 0                  \\
T_X\oplus A \arrow[u,"{(1,-P)}"] \arrow[rr, "{(0,\mathbbm{1})}"] &  & T^\vee_X \arrow[u]
    \end{tikzcd}
 \end{equation*}
 which is a quasi-isomorphism of complexes.
\end{example}

\begin{theorem}[Costello-Rozenblyum, Safronov]\label{Poisson}

    Let $X$ be a differentiable manifold, then the space of $n$-shifted Poisson structures on $X$ is equivalent to the space of equivalence classes of $n+1$-shifted Lagrangian maps 
    $$X\rightarrow \mathcal{X}.$$
    $X\rightarrow \mathcal{X}$ is equivalent to $X\rightarrow \mathcal{Y}$ iff there exists another $n+1$-shifted Lagrangian map $X\rightarrow \mathcal{Z}$ and a commutative diagram 
    \begin{equation}
        \begin{tikzcd}
            & X \arrow[d] \arrow[ld] \arrow[rd]          &             \\
\mathcal{X} & \mathcal{Z} \arrow[l, "a"'] \arrow[r, "b"] & \mathcal{Y},
\end{tikzcd}
    \end{equation}
    where $a,b$ are formally étale and compatible with the Lagrangian structures. 
\end{theorem}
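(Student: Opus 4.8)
The plan is to exhibit explicit assignments in both directions between the space of $n$-shifted Poisson structures on $X$ and the space of equivalence classes of $(n+1)$-shifted Lagrangian maps out of $X$, and to show they are mutually inverse. The forward assignment directly generalizes Example \ref{Poisson is Lag}. Given an $n$-shifted Poisson structure, encoded as a Maurer--Cartan element $\pi$ in the shifted polyvector fields (so that $[\pi,\pi]=0$), I would first promote the shifted cotangent bundle $A:=\mathds{T}^\vee_X[n]$ to an $L_\infty$-algebroid whose anchor is contraction with $\pi$ and whose binary bracket is the Koszul bracket $[\alpha,\beta]=L_{\pi\alpha}\beta-\iota_{\pi\beta}d\alpha$, together with higher brackets governed by $\pi$ when $n\neq 0$; for $n=0$ this is exactly the Poisson Lie algebroid $\mathds{T}^\vee_X$. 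Setting $\mathcal{X}=[X/A]$ produces the candidate target, equipped with the canonical closed form generalizing $\WWW=d_{DR}$ of Example \ref{Poisson is Lag}, with symbol $\mathds{1}:A\to \mathds{T}^\vee_X[n]$. The Maurer--Cartan equation for $\pi$ is precisely what renders this form closed in the shifted sense and non-degenerate, so $\mathcal{X}$ is $(n+1)$-shifted symplectic, and the quotient map $q:X\to\mathcal{X}$ carries the canonical trivial-isotropic Lagrangian structure whose $\gamma^\flat$ is the identity-type quasi-isomorphism of Example \ref{Poisson is Lag}.

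For the reverse assignment, given an $(n+1)$-shifted Lagrangian map $f:X\to\mathcal{X}$, I would extract the relative tangent complex $\mathds{T}_{X/\mathcal{X}}$, which is canonically a shifted Lie algebroid on $X$: it is the algebroid whose quotient stack $[X/\mathds{T}_{X/\mathcal{X}}]$ models the formal neighborhood of $X$ in $\mathcal{X}$, as in the discussion of quotient stacks following the definition of Lie algebroid. The Lagrangian condition supplies the equivalence $\gamma^\flat:\mathds{T}_{X/\mathcal{X}}\xrightarrow{\sim}\mathds{T}^\vee_X[n]$, identifying the underlying complex of this algebroid with $\mathds{T}^\vee_X[n]$; transporting the algebroid bracket across $\gamma^\flat$ yields a shifted Poisson Lie algebroid structure, and reading off its anchor and structure constants produces a bivector $\pi$ on $X$. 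The tower of closedness conditions for the shifted symplectic form then translates into $[\pi,\pi]=0$, so that $\pi$ is a genuine $n$-shifted Poisson structure.

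The crucial point is that both constructions depend only on the formal neighborhood of $X$ in the target, and this is exactly where the equivalence relation enters. A span $\mathcal{X}\xleftarrow{a}\mathcal{Z}\xrightarrow{b}\mathcal{Y}$ with $a,b$ formally étale and compatible with the Lagrangian structures induces equivalences of formal neighborhoods $[X/\mathds{T}_{X/\mathcal{X}}]\simeq[X/\mathds{T}_{X/\mathcal{Z}}]\simeq[X/\mathds{T}_{X/\mathcal{Y}}]$ respecting all symplectic and Lagrangian data, since a formally étale map is precisely one inducing an equivalence on relative tangent complexes and hence on these algebroids. Therefore the two Lagrangian maps carry identical algebroid-plus-symplectic data and define the same Poisson structure, so the reverse assignment descends to equivalence classes. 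Conversely, for any Lagrangian $f:X\to\mathcal{X}$ the canonical map from the formal completion $[X/\mathds{T}_{X/\mathcal{X}}]\to\mathcal{X}$ is formally étale, which exhibits $[X/A_{\pi_f}]$ and $\mathcal{X}$ in the same equivalence class; combined with the fact that the forward composite $\pi\mapsto[X/A_\pi]\mapsto\pi'$ literally reconstructs the Koszul bracket and hence returns $\pi'=\pi$, this shows the two assignments are mutually inverse.

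The main obstacle is the homotopy-coherent matching in the shifted regime $n\neq 0$: one must verify that the full hierarchy of $L_\infty$-algebroid brackets corresponds to the entire tower of conditions $d\omega_{p-1}+\delta\omega_p=0$, and that the resulting correspondence is an equivalence of mapping \emph{spaces} rather than a mere bijection of isomorphism classes. Concretely, the hard part is to promote the strict computation of Example \ref{Poisson is Lag} to a statement about the whole Maurer--Cartan space of structures, by identifying the deformation complex of Lagrangian structures on $q$ with the shifted polyvector complex computing $n$-shifted Poisson structures, compatibly with the gauge actions on both sides. This homotopical identification is the technical core supplied by Costello--Rozenblyum and Safronov; I would organize the argument so that the $n=0$ case of Example \ref{Poisson is Lag} serves as the base computation which the shifted machinery then propagates to all $n$.
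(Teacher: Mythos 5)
The first thing to say is that the paper itself contains no proof of this statement: Theorem \ref{Poisson} is quoted as an external result of Costello--Rozenblyum and Safronov, and the only thing the paper adds is the remark that, for an ordinary Poisson manifold, the corresponding $1$-shifted Lagrangian map can be taken to be the quotient map $q\colon X\to[X/T^*_X]$ of Example \ref{Poisson is Lag}. Your forward construction is exactly that example (plus a plausible $L_\infty$ extension for $n\neq 0$), so it is consistent with what the paper records. But judged as a proof, your proposal has a decisive gap, and it is one you name yourself: you identify the ``homotopy-coherent matching\ldots an equivalence of mapping \emph{spaces} rather than a mere bijection of isomorphism classes'' as the technical core, and then declare that this core is ``supplied by Costello--Rozenblyum and Safronov.'' That core \emph{is} the theorem. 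What you actually construct is an assignment in each direction at the level of objects, together with a heuristic argument that they are inverse on $\pi_0$ modulo the span equivalence relation. The statement, however, asserts an equivalence of spaces of structures, compatible with all homotopies and gauge equivalences; establishing this requires the machinery (relative polyvector fields, formality/strictification of the relevant $L_\infty$-structures) that you invoke as a black box. A proof that outsources its hard part to the theorem's own proof is circular, not incomplete.

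There is also a concrete pointwise gap in your reverse direction. From a Lagrangian $f\colon X\to\mathcal{X}$ you obtain the algebroid $\mathds{T}_{X/\mathcal{X}}$ and, via $\gamma^\flat$, an identification of its underlying complex with $\mathds{T}^\vee_X[n]$, so the anchor transports to a map $\mathds{T}^\vee_X[n]\to\mathds{T}_X$. You still must show (a) that this map is skew, i.e.\ genuinely comes from a bivector --- this uses the compatibility of the anchor with the shifted symplectic pairing, not merely the existence of the equivalence $\gamma^\flat$; and (b) that the transported bracket is forced to be the Koszul bracket of that bivector --- a Lie algebroid structure on $T^*_X$ is \emph{not} determined by its anchor, and it is the closedness of the shifted symplectic form (its compatibility with the Chevalley--Eilenberg differential of the algebroid, as in the analysis of shifted symplectic Lie algebroids in \cite{PS}) that pins the bracket down. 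You assert the conclusion (``reading off its anchor and structure constants produces a bivector $\pi$'') without either argument. Since these two points plus the space-level coherence are where the entire content of the correspondence lives, your proposal should be read as an accurate road map of the statement rather than a proof of it.
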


We only remark that given a Poisson manifold $X$, the corresponding $1$-shifted Lagrangian map can be the one in \ref{Poisson is Lag}. 

\subsection{coisotropic map and coisotropic intersection}

\begin{definition}
    Let $(X,P\in \Gamma(X, \Lambda^2 T_X))$ be a Poisson manifold. We say $f:Y \rightarrow X$ is a coisotropic map if the contraction map with the Poisson tensor $P$
    $$P: T^*X\rightarrow TX$$
    restricted to the conormal bundle $N^*_{Y/X}\rightarrow f^*T_X$ factor through $T_Y$.
\end{definition}

There is also an relative version of theorem \ref{Poisson} for coisotropic maps.

\begin{theorem}[Costello-Rozenblyum]\label{coisotropic}
    Let $Y$ and $X$ be differentiable manifolds. Then the space of coisotropic maps $f:Y\rightarrow X$ (together with Poisson structures on $X$) is equivalent to the space of equivalence classes of the commutative diagrams
    \begin{equation}
        \begin{tikzcd}
Y \arrow[r, "f"] \arrow[d, "p"] & X \arrow[d, "q"] \\
\mathcal{Y} \arrow[r, "g"]      & \mathcal{X}     
\end{tikzcd}
    \end{equation}
    where $\mathcal{X}$ is $1$-shifted symplectic, and $g,q$ are $1$-shifted Lagrangian maps such that the induced map 
    $$Y\rightarrow \mathcal{Y}\times_{\mathcal{X}}X$$
    is Lagrangian (recalling that the $1$-shifted Lagrangian intersection $\mathcal{Y}\times_{\mathcal{X}}X$ inherits a $0$-shifted symplectic structure).

\end{theorem}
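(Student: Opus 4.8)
The plan is to prove the stated equivalence by exhibiting mutually inverse constructions in the two directions, using Theorem \ref{Poisson} together with Example \ref{Poisson is Lag} to handle the Poisson data, and taking the PTVV formation of the $0$-shifted symplectic intersection (invoked in the statement) as an input. For the forward direction I start from a Poisson manifold $(X,P)$ and a coisotropic map $f:Y\to X$, and take $\mathcal{X}=[X/T^*_X]$ with the $1$-shifted symplectic form $\WWW$ and the Lagrangian quotient $q:X\to\mathcal{X}$ of Example \ref{Poisson is Lag}. The crucial classical input is that coisotropicity, i.e.\ $P(N^*_{Y/X})\subseteq T_Y$, is exactly the condition for the conormal bundle $N^*_{Y/X}$ to be closed under the Poisson bracket $[\alpha,\beta]=L_{P\alpha}\beta-\iota_{P\beta}d\alpha$, hence to inherit a Lie algebroid structure with anchor $P|_{N^*_{Y/X}}:N^*_{Y/X}\to T_Y$. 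I then set $\mathcal{Y}=[Y/N^*_{Y/X}]$, let $p:Y\to\mathcal{Y}$ be the quotient, and let $g:\mathcal{Y}\to\mathcal{X}$ be induced by $f$ and the inclusion of Lie algebroids $N^*_{Y/X}\hookrightarrow f^*T^*_X$; this makes the square commute.

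Next I would build the Lagrangian structures. As with $q$, the isotropic structure on $g$ is the canonical one: since a conormal covector $\alpha$ vanishes on $T_Y$, the tangential component $d\alpha|_{\Lambda^2 T_Y}(v,w)=-\alpha([v,w])$ of $g^*\WWW$ vanishes, which furnishes the required trivialization $(d+\delta)\gamma_g=g^*\WWW$. Non-degeneracy of $g$ then amounts to checking that $\gamma_g^\flat:\mathds{T}_{\mathcal{Y}/\mathcal{X}}\to\mathds{T}^\vee_{\mathcal{Y}}$ is a quasi-isomorphism. Unwinding the cofiber of $\mathds{T}_{\mathcal{Y}}\to g^*\mathds{T}_{\mathcal{X}}$, the relative tangent complex is built out of $N^*_{Y/X}$, $f^*T^*_X\oplus T_Y$ and $f^*T_X$, and comparing it with the dual of $[N^*_{Y/X}\xrightarrow{P}T_Y]$ reduces the claim to a duality between the normal and conormal data that holds precisely because the anchor lands in $T_Y$; this parallels the quasi-isomorphism verified for $q$ in Example \ref{Poisson is Lag}.

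The heart of the argument is the $0$-shifted Lagrangian condition on $Y\to\mathcal{Y}\times_{\mathcal{X}}X$. Here I would first compute the tangent complex of the derived intersection $W=\mathcal{Y}\times_{\mathcal{X}}X$, pulled back along $Y\to W$, as the homotopy fibre of $(\mathds{T}_{\mathcal{Y}}\oplus\mathds{T}_X)|_Y\to\mathds{T}_{\mathcal{X}}|_Y$, and identify the induced PTVV $0$-shifted symplectic form on it. One then shows that $Y\to W$ carries a Lagrangian structure exactly when $P(N^*_{Y/X})\subseteq T_Y$: the isotropic structure is again forced by the vanishing of $\alpha|_{T_Y}$ for conormal $\alpha$, while the non-degeneracy is equivalent to the statement that $T_Y$ is the $P$-orthogonal of $N^*_{Y/X}$, i.e.\ the infinitesimal form of coisotropicity. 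This is the step I expect to be the main obstacle, since it requires careful bookkeeping of the derived fibre product, of the two interacting Lagrangian structures of $g$ and $q$, and of the signs in the PTVV pairing, all of which must conspire to reproduce the coisotropic condition and nothing more.

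Finally I would assemble the equivalence of spaces. Given a diagram as in the statement, Theorem \ref{Poisson} applied to the Lagrangian $q:X\to\mathcal{X}$ returns a Poisson structure $P$ on $X$ up to the indicated equivalences, and the Lagrangian condition on $Y\to\mathcal{Y}\times_{\mathcal{X}}X$ returns, by the computation of the previous paragraph read backwards, the coisotropicity of $f$. To upgrade these pointwise inverse assignments into an equivalence of moduli spaces I would verify that the two constructions are natural and mutually inverse up to contractible choice, for instance by showing they induce isomorphisms on all homotopy groups of the respective mapping spaces while keeping track of the gauge equivalences (the formally étale maps $a,b$ of Theorem \ref{Poisson}, in their relative form) that define equivalence of diagrams on the target side.
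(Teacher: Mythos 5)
You should first be aware that the paper does not prove this statement at all: it is imported as a black box, attributed to Costello--Rozenblyum (in parallel with Theorem \ref{Poisson}), and is used only as an input to Theorem \ref{coisointerhasLag} and its corollary. So there is no proof in the paper to compare against, and your attempt has to be judged on its own merits.

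On those merits, your forward construction is sound in outline and reproduces the standard picture: coisotropicity of $f$ is exactly closure of $N^*_{Y/X}$ under the Poisson bracket, so $N^*_{Y/X}$ is a Lie subalgebroid of the Poisson Lie algebroid, and the induced map $[Y/N^*_{Y/X}]\rightarrow [X/T^*_X]$ carries a canonical Lagrangian structure (this is precisely the paper's later, also unproved, Lemma on coisotropic submanifolds, and is proved in the cited work of Pym--Safronov). The genuine gap is in the claimed \emph{equivalence of spaces}, which is the actual content of the theorem. Two concrete problems. First, your backward direction only works for diagrams already in standard form: given an arbitrary diagram $(Y\rightarrow \mathcal{Y}\rightarrow \mathcal{X}\leftarrow X)$ satisfying the hypotheses, nothing you wrote identifies $\mathcal{Y}$ with $[Y/N^*_{Y/X}]$ or $\mathcal{X}$ with $[X/T^*_X]$; what is needed is a reduction, via formally \'etale maps compatible with the Lagrangian structures (the relative analogue of the equivalence relation in Theorem \ref{Poisson}), of an arbitrary diagram to the standard model. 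This is a formal-geometry, Darboux-type statement about $1$-shifted Lagrangians $X\rightarrow\mathcal{X}$ with $X$ a manifold, and it is exactly where the depth of the Costello--Rozenblyum result lies; your proposal never engages with it. Second, your closing paragraph --- ``verify that the two constructions are natural and mutually inverse up to contractible choice \dots by showing they induce isomorphisms on all homotopy groups of the respective mapping spaces'' --- is a restatement of what must be proven, not an argument: producing a point-wise assignment in each direction does not by itself give maps of moduli spaces, let alone inverse equivalences, and no mechanism (e.g.\ functoriality of the conormal construction in families, or a comparison of the two sides as complete Segal spaces / $\infty$-groupoids) is offered. As written, the proposal establishes only that every coisotropic map yields \emph{some} diagram of the required type, which is the easy half of one direction.
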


\begin{theorem}[PTVV]\label{Lagintersection}
    Derived $n$-shifted Lagrangian intersection inherits a $n-1$-shifted symplectic structure.
\end{theorem}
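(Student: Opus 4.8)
The plan is to follow the strategy of Pantev--Toën--Vaquié--Vezzosi: first assemble a closed $(n-1)$-shifted $2$-form on the intersection out of the two isotropic structures, and then verify its non-degeneracy by comparing exact triangles of tangent complexes. Fix the data by writing the intersection as the derived fibre product $L=L_1\times_\sX L_2$ of two Lagrangian maps $f_i\colon L_i\to(\sX,\omega)$ with Lagrangian structures $\gamma_i$, with projections $p_i\colon L\to L_i$ and common composite $f:=f_1\circ p_1\simeq f_2\circ p_2\colon L\to\sX$.

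For the form, I would observe that each $\gamma_i\in\prod_{p\ge 0}\Omega^{2+p,\,n-1-p}_{L_i}$ satisfies $(d+\delta)\gamma_i=f_i^*\omega$, so the pullbacks $p_i^*\gamma_i$ trivialise $f^*\omega$ on $L$ in two a priori different ways. Hence I set $\omega_L:=p_1^*\gamma_1-p_2^*\gamma_2$; using the canonical homotopy $f_1\circ p_1\simeq f_2\circ p_2$ that identifies $p_1^*f_1^*\omega$ with $p_2^*f_2^*\omega$, one finds $(d+\delta)\omega_L\simeq f^*\omega-f^*\omega\simeq 0$, so $\omega_L$ is a closed $2$-form of cohomological degree $n-1$. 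It remains to show that the induced map $\omega_L^\flat\colon\mathds{T}_L\to\mathds{T}^\vee_L[n-1]$ is an equivalence.

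For non-degeneracy I would use the tangent complex of the derived fibre product together with base change, which gives $\mathds{T}_{L/L_1}\simeq p_2^*\mathds{T}_{L_2/\sX}$ and $\mathds{T}_{L/L_2}\simeq p_1^*\mathds{T}_{L_1/\sX}$, while the Lagrangian conditions supply equivalences $\gamma_i^\flat\colon\mathds{T}_{L_i/\sX}\xrightarrow{\sim}\mathds{T}^\vee_{L_i}[n-1]$. The idea is then to fit $\omega_L^\flat$ as the middle vertical arrow of a morphism from the exact triangle
$$\mathds{T}_{L/L_1}\to\mathds{T}_L\to p_1^*\mathds{T}_{L_1}$$
to the dualised-and-shifted triangle coming from $\mathds{T}_{L/L_2}\to\mathds{T}_L\to p_2^*\mathds{T}_{L_2}$, namely
$$p_2^*\mathds{T}^\vee_{L_2}[n-1]\to\mathds{T}^\vee_L[n-1]\to\mathds{T}^\vee_{L/L_2}[n-1].$$
The left vertical arrow is $\mathds{T}_{L/L_1}\simeq p_2^*\mathds{T}_{L_2/\sX}\xrightarrow{p_2^*\gamma_2^\flat}p_2^*\mathds{T}^\vee_{L_2}[n-1]$, and the right vertical arrow is the equivalence $p_1^*\mathds{T}_{L_1}\simeq\mathds{T}^\vee_{L/L_2}[n-1]$ gotten by dualising and shifting $p_1^*\gamma_1^\flat$ under $\mathds{T}_{L/L_2}\simeq p_1^*\mathds{T}_{L_1/\sX}$; both are equivalences. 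Once the square is shown to commute up to coherent homotopy, the stable analogue of the five lemma (two-out-of-three for fibre sequences) forces $\omega_L^\flat$ to be an equivalence.

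The genuinely hard part will be the homotopy-commutativity of that square: that the map $\omega_L^\flat$ induced by the concrete primitive difference $p_1^*\gamma_1-p_2^*\gamma_2$ really agrees, up to the specified homotopies, with the two boundary duality equivalences $p_i^*\gamma_i^\flat$. This amounts to unwinding the two null-homotopies entering the definition of each $\gamma_i^\flat$ and matching them against the homotopy $f_1 p_1\simeq f_2 p_2$ used to build $\omega_L$. The remaining steps---the construction of $\omega_L$, the identification of the relative tangent complexes, and the final diagram chase---I expect to be formal consequences of base change and duality in the stable setting.
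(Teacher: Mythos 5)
Your proposal is correct in outline, but note that the paper does not actually prove Theorem \ref{Lagintersection}: it is quoted as an external result of \cite{PTVV} (Theorem 2.9 there), so there is no in-paper proof to compare against. What you have written is essentially a faithful reconstruction of PTVV's own argument: the closed $(n-1)$-shifted $2$-form arises as the ``difference'' (in PTVV's formulation, the loop obtained by concatenating the two paths $p_1^*\gamma_1$ and $p_2^*\gamma_2$ from $0$ to $f^*\omega$ in the space of closed $2$-forms), and non-degeneracy follows from the base-change identifications $\mathds{T}_{L/L_i}\simeq p_j^*\mathds{T}_{L_j/\sX}$ together with a morphism of fibre sequences and the two-out-of-three property. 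The coherence issue you flag as the hard part is genuine and is where the simplicial/graded-mixed machinery of \cite{PTVV} earns its keep, but it is a known difficulty rather than a gap in the strategy. It is worth observing that the paper's own contribution right after this statement --- Lemma \ref{exacttriangle} and Theorem \ref{coisointerhasLag} --- adapts exactly your mechanism to the coisotropic setting: there too one takes the difference of pulled-back Lagrangian structures to get an isotropic structure, exhibits the induced map $\gamma^\flat$ as the third vertical arrow in a map between distinguished triangles whose other two verticals are equivalences supplied by the Lagrangian conditions, and concludes by the five lemma. So your reconstruction is both correct and consistent with the technique the paper itself deploys for its new results.
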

\begin{lemma}\label{exacttriangle}
    Suppose we have 2 derived fiber products and commuting diagram
    \begin{equation}\label{2fiberproducts}
    \begin{tikzcd}
F=Y_1\times_X Y_2 \arrow[d] \arrow[r] \arrow[rrd] & Y_2 \arrow[d] \arrow[rrd] &                                                                   &                         \\
Y_1 \arrow[r] \arrow[rrd]                       & X \arrow[rrd]             & \mathcal{F}=\mathcal{Y}_1\times_\mathcal{X} \mathcal{Y}_2 \arrow[r] \arrow[d] & \mathcal{Y}_2 \arrow[d] \\
                                                &                           & \mathcal{Y}_1 \arrow[r]                                           & \mathcal{X},            
\end{tikzcd}
 \end{equation}
Then there is a distinguished triangle of complexes of sheaves on $F$ (by abuse of notation, I will not write the pullback of sheaves to $F$)
$$\mathds{T}_{X/\sX}[-1]\rightarrow \mathds{T}_{Y_1/\sY_1\times_\sX X}\oplus \mathds{T}_{Y_2/\sY_2\times_\sX X} \rightarrow \mathds{T}_{F/\mathcal{F}}.$$
\end{lemma}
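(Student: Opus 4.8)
The plan is to realize the claimed triangle as the rotation of a transitivity triangle for relative tangent complexes, after identifying the middle term by base change and by the additivity of relative tangent complexes of fibre products. Throughout, all fibre products are derived (homotopy) fibre products, so base change for relative tangent complexes applies, and I suppress pullbacks of sheaves to $F$ exactly as in the statement, writing $V|_F$ for the pullback. Set $P_i:=\sY_i\times_\sX X$ for $i=1,2$; the commutativity of the cube gives canonical maps $f_i\colon Y_i\to P_i$ over $X$ (induced by $Y_i\to\sY_i$ and $Y_i\to X$, which agree over $\sX$), so that $\mathds{T}_{Y_i/P_i}$ is precisely the $i$-th summand in the statement.

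First I would introduce the auxiliary stack $G:=\mathcal{F}\times_\sX X$. Since it is a homotopy pullback of $X\xrightarrow{q}\sX$ along $\mathcal{F}\to\sX$, base change gives $\mathds{T}_{G/\mathcal{F}}\simeq \mathds{T}_{X/\sX}|_G$. The canonical map $F\to G$ (induced by $F\to\mathcal{F}$ and $F\to X$) composes with $G\to X$ to the structure map $F\to X$, so the transitivity triangle for $F\to G\to\mathcal{F}$ reads
$$\mathds{T}_{F/G}\to \mathds{T}_{F/\mathcal{F}}\to \mathds{T}_{X/\sX}|_F\xrightarrow{[1]},$$
whose rotation is exactly the desired triangle, provided one establishes the identification $\mathds{T}_{F/G}\simeq \mathds{T}_{Y_1/P_1}|_F\oplus\mathds{T}_{Y_2/P_2}|_F$.

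It remains to compute $\mathds{T}_{F/G}$, and here I would use a second identification. By associativity and commutativity of homotopy fibre products,
$$G=(\sY_1\times_\sX\sY_2)\times_\sX X\;\simeq\;(\sY_1\times_\sX X)\times_X(\sY_2\times_\sX X)=P_1\times_X P_2,$$
since both compute the homotopy limit of $\sY_1\xrightarrow{g_1}\sX\xleftarrow{q}X$ together with $\sY_2\xrightarrow{g_2}\sX$. Thus $F=Y_1\times_X Y_2$ and $G=P_1\times_X P_2$ are fibre products over the common base $X$, and the relative tangent complex of such a fibre product splits as a direct sum (dually to the fact that the cotangent complex sends coproducts of $X$-algebras to direct sums): $\mathds{T}_{F/X}\simeq \mathds{T}_{Y_1/X}|_F\oplus\mathds{T}_{Y_2/X}|_F$, and likewise $\mathds{T}_{G/X}\simeq \mathds{T}_{P_1/X}|_G\oplus\mathds{T}_{P_2/X}|_G$. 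Because $F\to G$ commutes with the projections to the two factors, i.e. $\mathrm{pr}_i^{G}\circ(F\to G)=f_i\circ \mathrm{pr}_i^{F}$, the induced map $\mathds{T}_{F/X}\to\mathds{T}_{G/X}|_F$ is block-diagonal with entries $\mathds{T}_{Y_i/X}\to\mathds{T}_{P_i/X}$. Taking fibres of the transitivity triangle for $F\to G\to X$ summand by summand, and using $\mathrm{fib}(\mathds{T}_{Y_i/X}\to\mathds{T}_{P_i/X})\simeq\mathds{T}_{Y_i/P_i}$ (transitivity for $Y_i\to P_i\to X$), yields $\mathds{T}_{F/G}\simeq\mathds{T}_{Y_1/P_1}|_F\oplus\mathds{T}_{Y_2/P_2}|_F$, which finishes the proof.

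I expect the main obstacle to be the block-diagonality asserted in the last paragraph: one must check that the additive decomposition of $\mathds{T}_{-/X}$ for a fibre product is \emph{natural} with respect to the map of fibre products $F\to G$, so that no off-diagonal cross terms appear, and that all the base-change and transitivity equivalences are compatibly pulled back to $F$. Once this naturality is in place, the remainder is a formal manipulation of distinguished triangles.
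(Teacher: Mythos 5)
Your proof is correct, but it takes a genuinely different route from the paper's. The paper never introduces an auxiliary stack: it builds the two maps of the triangle directly, one summand at a time. Its first map $\mathds{T}_{X/\sX}[-1]\to\mathds{T}_{Y_1/\sY_1\times_\sX X}$ comes from the base-change equivalence $\mathds{T}_{X/\sX}\simeq\mathds{T}_{\sY_1\times_\sX X/\sY_1}$ followed by the connecting map of the rotated transitivity triangle for $Y_1\to\sY_1\times_\sX X\to\sY_1$; its second map comes from the base-change equivalence $\mathds{T}_{Y_1/\sY_1\times_\sX X}\simeq\mathds{T}_{F/\mathcal{F}\times_{\sY_2}Y_2}$ followed by the natural map $\mathds{T}_{F/\mathcal{F}\times_{\sY_2}Y_2}\to\mathds{T}_{F/\mathcal{F}}$. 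Notably, the paper stops there: it does not verify that the composite is null-homotopic, nor that the resulting sequence is distinguished. Your argument, by exhibiting the sequence as the rotation of the single transitivity triangle for $F\to G\to\mathcal{F}$ with $G=\mathcal{F}\times_\sX X\simeq(\sY_1\times_\sX X)\times_X(\sY_2\times_\sX X)$, gets distinguishedness for free, at the cost of the block-diagonality check you flag; that check does go through, by functoriality of relative differentials applied to the commuting squares relating $F\to G$ to $Y_i\to\sY_i\times_\sX X$, since both direct-sum decompositions are defined by the differentials of the projections. So your route is arguably the more complete proof of the statement as written, while the paper's route buys an explicit, component-wise description of the two maps, which is what is actually used in Theorem \ref{coisointerhasLag}, where the triangle must be compared with the sequence of cotangent complexes via the Lagrangian structures. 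If your triangle is to serve in that application, you would need the (routine but not automatic) additional check that your connecting map and middle-term identification agree with the component-wise ones constructed in the paper.
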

We first construct the map $\mathds{T}_{X/\sX}[-1]\rightarrow \mathds{T}_{Y_1/\sY_1\times_\sX X}$. This comes from the equivalence $\mathds{T}_{X/\sX}\cong \mathds{T}_{\sY_1\times_\sX X/\sY_1}$ and the first map of the distinguished triangle
$$\mathds{T}_{\sY_1\times_\sX X/\sY_1}[-1]\rightarrow \mathds{T}_{Y_1/\sY_1\times_\sX X}\rightarrow \mathds{T}_{Y_1/\sY_1}.$$
To construct the second map $\mathds{T}_{Y_1/\sY_1\times_\sX X}\rightarrow \mathds{T}_{F/\mathcal{F}}$, note that $\mathds{T}_{Y_1/\sY_1\times_\sX X}\cong \mathds{T}_{F/\mathcal{F}\times_{\sY_2}Y_2}$, then the natural map $\mathds{T}_{F/\mathcal{F}\times_{\sY_2}Y_2}\rightarrow \mathds{T}_{F/\mathcal{F}}$ does the job.

\begin{comment}
    we look at the following diagram
\begin{equation*}
    \begin{tikzcd}
\mathds{T}_{Y_1/\mathcal{Y}_1\times_\mathcal{X}X} \arrow[rr] \arrow[d, dashed] &  & \mathds{T}_{Y_1/X}\cong \mathds{T}_{F/Y_2} \arrow[d] &                               \\
\mathds{T}_{F/\mathcal{F}} \arrow[rr]                                          &  & \mathds{T}_{F/\sY_2} \arrow[r]                       & \mathds{T}_{\mathcal{F}/\sY_2}
\end{tikzcd}
\end{equation*}
The bottom line above is a distinguished triangle. And the dashed arrow exists because the composited map $\mathds{T}_{Y_1/\mathcal{Y}_1\times_\mathcal{X}X}\rightarrow \mathds{T}_{\mathcal{F}/\sY_2}$ is trivial.
\end{comment}

\begin{theorem}\label{coisointerhasLag}
    If the commutative diagram \ref{2fiberproducts} santisfies additional property that the bottom square and the rightmost square
    \begin{equation*}
        \begin{tikzcd}
Y_1 \arrow[r] \arrow[d] & X \arrow[d] & Y_2 \arrow[d] \arrow[r] & X \arrow[d] \\
\mathcal{Y}_1 \arrow[r] & \mathcal{X} & \mathcal{Y}_2 \arrow[r] & \mathcal{X}
\end{tikzcd}
    \end{equation*}
    both santisfy the condition described in Theorem \ref{coisotropic}(hence $F$ is a derived coisotropic intersection), then the map
    $$F\rightarrow \mathcal{F}$$
    in \ref{2fiberproducts} is $0$-shifted Lagrangian.
\end{theorem}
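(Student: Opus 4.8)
The target map $p\colon F\to\mathcal{F}$ has codomain a $0$-shifted symplectic stack by Theorem \ref{Lagintersection}, so to prove it is Lagrangian I must (a) exhibit an isotropic structure $\gamma\in\prod_{p\geq 0}\Omega_F^{2+p,-1-p}$ with $(d+\delta)\gamma=p^*\omega_{\mathcal{F}}$, and (b) show the induced map $\gamma^\flat\colon\mathds{T}_{F/\mathcal{F}}\to\mathds{T}^\vee_F[-1]$ is an equivalence. I would construct the isotropic structure first and then verify non-degeneracy by a tangent-complex comparison, which is the conceptual heart of the argument.

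Write $Z_i=\mathcal{Y}_i\times_\mathcal{X}X$, the $0$-shifted symplectic intersections, and let $\gamma_i$ be the isotropic structure witnessing that $Y_i\to Z_i$ is Lagrangian (this is the hypothesis of Theorem \ref{coisotropic}), so that $(d+\delta)\gamma_i=\omega_{Z_i}$. By the PTVV construction, $\omega_{Z_i}$ is the difference of the isotropic structures of $\mathcal{Y}_i\to\mathcal{X}$ and of $X\to\mathcal{X}$, while $\omega_{\mathcal{F}}$ is the difference of the isotropic structures of $\mathcal{Y}_1\to\mathcal{X}$ and $\mathcal{Y}_2\to\mathcal{X}$. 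The contribution of $X\to\mathcal{X}$ therefore enters both $\omega_{Z_1}$ and $\omega_{Z_2}$ and cancels in the difference, so that after pulling back to $F$ one gets
\begin{equation*}
\omega_{Z_1}\big|_F-\omega_{Z_2}\big|_F=p^*\omega_{\mathcal{F}}.
\end{equation*}
Consequently $\gamma:=\gamma_1|_F-\gamma_2|_F$ satisfies $(d+\delta)\gamma=p^*\omega_{\mathcal{F}}$ and is the required isotropic structure.

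For non-degeneracy I would feed the Lagrangian hypotheses into Lemma \ref{exacttriangle}. The coisotropy condition gives $\mathds{T}_{Y_i/Z_i}\simeq\mathds{T}^\vee_{Y_i}[-1]$, and $X\to\mathcal{X}$ being $1$-shifted Lagrangian gives $\mathds{T}_{X/\mathcal{X}}\simeq\mathds{T}^\vee_X$. Substituting these identifications into the triangle of Lemma \ref{exacttriangle} yields a distinguished triangle
\begin{equation*}
\mathds{T}^\vee_X[-1]\to\mathds{T}^\vee_{Y_1}[-1]\oplus\mathds{T}^\vee_{Y_2}[-1]\to\mathds{T}_{F/\mathcal{F}}.
\end{equation*}
On the other hand, dualizing and shifting the tangent triangle of the fibre product $F=Y_1\times_X Y_2$ produces the distinguished triangle
\begin{equation*}
\mathds{T}^\vee_X[-1]\to\mathds{T}^\vee_{Y_1}[-1]\oplus\mathds{T}^\vee_{Y_2}[-1]\to\mathds{T}^\vee_F[-1].
\end{equation*}
These two triangles share their first two vertices; if they also share their connecting map, then the induced equivalence on third vertices is exactly $\mathds{T}_{F/\mathcal{F}}\simeq\mathds{T}^\vee_F[-1]$, which (when identified with $\gamma^\flat$) gives the non-degeneracy.

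The main obstacle is precisely the coherence hidden in the phrase ``share their connecting map.'' One must verify, at the level of the full closed forms rather than their leading $2$-form components, that $\gamma_1-\gamma_2$ assembles into a genuine isotropic structure, and that the resulting $\gamma^\flat$ is identified with the third-vertex map of the comparison of the two triangles above. Concretely, the cleanest route is to produce a map of distinguished triangles whose first two components are the Lagrangian equivalences $\gamma_1^\flat,\gamma_2^\flat$ for the $Y_i$ and $\omega_0^\flat$ for $X$; once such a map of triangles is in hand, non-degeneracy of $\gamma^\flat$ follows formally from the two-out-of-three property. I expect this compatibility to follow by tracking the PTVV construction functorially through the iterated fibre product, but it is the step that requires genuine bookkeeping of homotopies rather than a formal diagram chase.
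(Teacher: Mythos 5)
Your proposal is correct and follows essentially the same route as the paper: the isotropic structure is obtained as the difference of the pulled-back Lagrangian structures on $Y_i\to\mathcal{Y}_i\times_\mathcal{X}X$, and non-degeneracy is proved by comparing the triangle of Lemma \ref{exacttriangle} with the shifted cotangent triangle of $F=Y_1\times_X Y_2$ via a map of distinguished triangles whose first two vertical maps are the Lagrangian equivalences for $X\to\mathcal{X}$ and $Y_i\to\mathcal{Y}_i\times_\mathcal{X}X$, concluding by two-out-of-three. The coherence point you flag (that the third vertical map of this triangle comparison really is $\gamma^\flat$) is precisely what the paper asserts without further justification, so your write-up is, if anything, more explicit about the remaining bookkeeping.
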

\begin{proof}
    By theorem \ref{Lagintersection}, $\mathcal{F}$ inherits a $0$-shited symplectic structure. By taking the difference of the pull back of the Lagrangian structures on 
    \begin{equation}\label{LagstrY}
    Y_i\rightarrow \mathcal{Y}_i\times_{\mathcal{X}}X, \quad i=1,2,
    \end{equation}
    one get an isotropic structure $\gamma$ on
    $$F\rightarrow \mathcal{F}.$$
    We need to show this isotropic structure is Lagrangian. Indeed, the induced map $$\gamma^\flat: \mathds{T}_{F/\mathcal{F}}\rightarrow \mathds{T}_\mathcal{F}$$ fits into the commutative diagram 
    \begin{equation}
        \begin{tikzcd}
{\mathds{T}_{X/\sX}[-1]} \arrow[rr] \arrow[d] &  & \mathds{T}_{Y_1/\sY_1\times_\sX X}\oplus \mathds{T}_{Y_2/\sY_2\times_\sX X} \arrow[rr] \arrow[d] &  & \mathds{T}_{F/\mathcal{F}} \arrow[d, "\gamma^\flat"] \\
\mathds{T}^*_{X} \arrow[rr]               &  & \mathds{T}^*_{Y_1}\oplus \mathds{T}^*_{Y_2} \arrow[rr]                                           &  & \mathds{T}^*_F                                       
        \end{tikzcd}
    \end{equation}
    The middle vertical map is induced by the Lagrangian structures on \ref{LagstrY}, hence is a quasi-isomorphism. The left vertical map is induced by the $1$-shited Lagrangian structure on $X\rightarrow \mathcal{X}$, hence is a quasi-isomorphism. Since both rows are exact triangles \ref{exacttriangle}, so the right vertical map is also a quasi-isomorphism.
    
\end{proof}

\begin{corollary}
    Derived coisotropic intersection inherits a $-1$-shifted Poisson structure.
\end{corollary}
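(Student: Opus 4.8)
The plan is to deduce this as a formal consequence of the two main structural results already in hand: Theorem \ref{coisointerhasLag} and the Costello--Rozenblyum--Safronov correspondence of Theorem \ref{Poisson}. First I would invoke Theorem \ref{coisointerhasLag} directly: under the stated hypotheses (both the bottom and rightmost squares of \ref{2fiberproducts} satisfy the coisotropic condition of Theorem \ref{coisotropic}), the derived coisotropic intersection $F=Y_1\times_X Y_2$ is equipped with a canonical $0$-shifted Lagrangian map
$$F\rightarrow \mathcal{F}, \qquad \mathcal{F}=\mathcal{Y}_1\times_{\mathcal{X}}\mathcal{Y}_2,$$
where $\mathcal{F}$ carries the $0$-shifted symplectic structure produced by PTVV (Theorem \ref{Lagintersection}). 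This is precisely the geometric input required to run the correspondence.

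Next I would feed this Lagrangian map into Theorem \ref{Poisson}, specialized to $n=-1$. That theorem identifies the space of $n$-shifted Poisson structures on a (derived) manifold with the space of equivalence classes of $(n+1)$-shifted Lagrangian maps out of it. With $n=-1$, the datum of a $0$-shifted Lagrangian map $F\rightarrow \mathcal{F}$ into a $0$-shifted symplectic target is exactly the datum of a $(-1)$-shifted Poisson structure on $F$. Transporting the Lagrangian structure of Theorem \ref{coisointerhasLag} across this equivalence then endows $F$ with the desired $(-1)$-shifted Poisson structure, completing the argument.

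The substance of the corollary thus lies entirely in the preceding theorems; the only points genuinely requiring care are bookkeeping ones. I expect the main obstacle to be verifying that the correspondence of Theorem \ref{Poisson} is available for $F$, which is a \emph{derived} fiber product rather than an ordinary manifold, so one must ensure the equivalence is stated (or extends) in the derived setting. Beyond this, I would check that the degree conventions align — the target $\mathcal{F}$ must be $0$-shifted symplectic and the map $0$-shifted Lagrangian in order for the output to land in degree $-1$, which is exactly what Theorem \ref{coisointerhasLag} supplies — and that the induced Poisson structure is independent of the auxiliary data entering the construction of $\mathcal{F}$. This last independence is guaranteed by the ``formally étale'' equivalence relation built into the statement of Theorem \ref{Poisson}, so no additional naturality argument should be needed.
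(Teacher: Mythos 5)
Your proposal is correct and follows exactly the paper's own argument: apply Theorem \ref{coisointerhasLag} to obtain the $0$-shifted Lagrangian map $F\rightarrow\mathcal{F}$, then invoke the correspondence of Theorem \ref{Poisson} at $n=-1$ to convert it into a $(-1)$-shifted Poisson structure on $F$. Your additional bookkeeping remarks (derived versus smooth setting, degree conventions, independence of auxiliary data) are sensible caveats but do not change the substance, which matches the paper's proof.
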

\begin{proof}
    If $F$ is a coisotropic intersection, then by theorem \ref{coisotropic} and \ref{coisointerhasLag}, there is an $0$-shited Lagrangian map $F\rightarrow \mathcal{F}$. Theorem \ref{Poisson} implies that $F$ inherits a $-1$-shifted Poisson structure.
\end{proof}
\begin{remark}
    Since Costello-Rozenblyum and Safronov's theorems hold in general for $n$-shifted case, the results here easily generalized to $n$-shifted case. This corollary is also proved by Melani-Safronov using a different method.
\end{remark}

\section{Homotopy holomorphic differentiable stacks}

In this section, I will describe an explicit polydifferential operator model for homotopy holomorphic structure on a real differentiable stack, which is the quotient of a smooth manifold by a  real Lie algebroid. From this model, one can extract a complex foliation $\mathcal{L}$ on the quotient stack, which serves as the stacky analogue of the $(0,1)$ tangent sheaf in usual complex geometry. Thus, holomorphic vector bundles in this context can be defined to be Lie algebroid modules over $\mathcal{L}$.

A Lie algebroid over a manifold $X$ is a vector bundle $A\rightarrow X$, whose space of smooth sections is equipped with a Lie bracket which is compatible with the Lie bracket of tangent vector fields by an anchor map $a:A\rightarrow T_X$. The importance of this concept lies in that Lie algebroids (and $L_\infty$ algebroids) model formal neighborhoods of manifolds in stacks (and higher stacks). In other words, the formal neighborhood of a manifold $X$ in a stack locally looks like the quotient stack $[X/A]$ of $X$ by a Lie algebroid $A$. As a first approximation, one may  think of the quotient stack $[X/A]$ as a variant of the total space of the $1$-shifted bundle $A[1] \rightarrow X$ which is twisted by the Lie algebroid structure.

If $X$ is a complex manifold and $A\rightarrow X$ is a holomorphic Lie algebroid. Then the quotient stack is naturally a holomorphic stack. However, there are geometric situations (see section \ref{GC}) where $X$ is not a complex manifold whereas the quotient stack $[X/A]$ should still be thought of as a ``holomorphic" stack. This motivates the need of developing a general theory of homotopy complex structures in derived differential geometry. The systematic study of such structures is the subject of this paper  and of the recent work \cite{PPS} of Pantev, Pym, and Safronov. The approaches taken in \cite{PPS} and this paper are different. In \cite{PPS} the authors describe  homotopy complex structures on derived stacks and their interactions with shifted symplectic geometry from first principles. To achieve this, the authors of \cite{PPS} utilize the language of derived foliations and the universal weak action of an appropriate classifying operad. In contrast,  I develop the theory in concrete local differential geometric terms utilizing the language of differential Lie algebroids and their poly differential operators.  This leads to the following definition  of homotopy holomorphic stack.

Let $\mathcal{X}$ be the formal quotient stack for the action of a real Lie algebroid $A$ on a manifold $X$. Let $\Omega(T_{\mathcal{X}})$ be the space of differential forms with values in the tangent complex of $\mathcal{X}$. By its nature $\Omega(T_{\mathcal{X}})$ is a bi-complex where the bi-grading of the terms is labeled by the form degree $p$ and cohomological degree $d$. An element in $\Omega^{p}(T_{\mathcal{X}})^{d}$ has total degree $p+d$.

\begin{definition}
$\mathcal{X} = [X/A]$ be the formal quotient of a manifold by a real Lie algebroid.
A {\em\bfseries homotopy holomorphic structure} on $\mathcal{X}$ is a pair $(\mathcal{I},\mathcal{Q})$ where
\bee
\mathcal{I}=\sI_1+\sI_2+\cdots\in \prod_{p\geq 1} \Omega^{p}(T_{\mathcal{X}})^{1-p},
\;
\mathcal{Q}=\sQ_1+\sQ_2+\cdots\in \prod_{p\geq 1} \Omega^{p}(T_{\mathcal{X}})^{-p}
\eee
such that 

\begin{alignat}{3}
\label{IQcom}
[\sI,\sQ]_{NR} & =0  \\
\label{IMC}
\delta \mathcal{I} +\frac{1}{2}[\sI,\sI]_{FN} & =0 \\
\label{Isq}
\delta_{\sI}\sQ+\frac{1}{2}[\sI,\sI]_{NR} & =-1
\end{alignat}

\end{definition}

\begin{remark} Here $[-,-]_{FN}$ and $[-,-]_{NR}$ denote the Fr\"{o}licher-Nijenhuis and the Nijenhuis-Richardson brackets respectively.  In the given data $\sI_1\in End(T_{\sX})$ is the main term of the homotopy holomorphic structure, and all other terms are homotopy data similar to the key describing the closedness of a form in derived geometry as in \cite{PTVV}. The condition \eqref{Isq} implies that $\sI_1$ squares to $-1$ up to a homotopy given by $\sQ_1$. The condition \eqref{IMC} implies that $\sI_1$ is a multiplicative $(1,1)$ tensor whose Nijenhuis torsion vanishes up to a homotopy given by $\sI_2$. Finally, the condition \eqref{IQcom} implies that $\sI_1$ commutes with $\sQ_1$.
\end{remark}

We would like to understand  holomorphic bundles on a homotopy holomorphic stack. Unfortunately, we don't have the structure sheaf of holomorphic functions, neither do we have a $\bar{\partial}$ operator. To define a holomorphic vector bundle on $\sX$, we will use an anti-holomorphic foliation, which is a counterpart of the $T^{0,1}$ sheaf  in usual complex geometry. 

\begin{definition}
An {\em\bfseries anti-holomorphic foliation} on $\sX$ is a complex Lie algebroid $\sL$ over $\sX$ with anchor map $\rho:\sL \rightarrow T_{\sX}^\mathbb{C}$ such that 
\be
\rho\oplus \bar{\rho} : \sL \oplus \bar{\sL} \rightarrow T_{\sX}^\mathbb{C}
\ee
is a quasi-isomorphism.
\end{definition}

\begin{remark} The anti-holomorphic foliation is only special case of the more general complex derived foliations on $\mathcal{X}$ one might work with, but it is enough for our purposes.
\end{remark}

\begin{theorem}\label{holofoli} 
Let $(\sX,\sI,\sQ)$ be a homotopy holomorphic stack where $\sX$ is the quotient stack of a manifold by the action of real Lie algebroid. Then $\sX$ admits a canonical anti-holomorphic foliation $\sL$ together with a homotopy $\gamma:\sL \rightarrow T_{\sX}[-1]$ such that the following diagram commutes
\be
\begin{tikzcd}
\mathcal{L} \arrow[d, "-i"'] \arrow[r, "\rho"] & T_{\mathcal{X}} \arrow[d, "\mathcal{I}_1"] \arrow[ld, "\gamma"', Rightarrow, shift left] \\
\mathcal{L} \arrow[r, "\rho"']                 & T_{\mathcal{X}}                                                                         
\end{tikzcd}
\ee
\end{theorem}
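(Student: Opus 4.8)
The plan is to imitate ordinary almost complex geometry, where the anti-holomorphic foliation is the $(-i)$-eigenbundle $T^{0,1}$ of the almost complex structure. In this dictionary the three defining equations play their classical roles: \eqref{Isq} says $\sI_1$ is a complex structure, so that $T_{\sX}^{\mathbb{C}}$ splits into $\pm i$-eigenspaces; \eqref{IMC} is integrability, so that the $(-i)$-eigenspace is involutive and becomes a Lie algebroid; and \eqref{IQcom} supplies the compatibility that lets one organize both structures coherently. The new feature is that each statement now holds only up to a specified homotopy, the leading homotopies being $\sQ_1$ (for $\sI_1^2=-\mathrm{id}$) and $\sI_2$ (for the vanishing of the Nijenhuis torsion), so the construction must be carried out in the idempotent-complete homotopy category of complexes over the Chevalley--Eilenberg algebra of $A$.

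First I would build $\sL$ together with its anchor $\rho$ and the $2$-cell $\gamma$. Consider the degree-zero endomorphism $\pi_-=\tfrac12(\mathrm{id}+i\sI_1)$ of $T_{\sX}^{\mathbb{C}}$. Using the leading part of \eqref{Isq}, namely $\sI_1^2+\mathrm{id}=-\delta\sQ_1$, one computes $\pi_-^2-\pi_-=-\tfrac14(\mathrm{id}+\sI_1^2)=\tfrac14\,\delta\sQ_1$, so $\pi_-$ is idempotent up to the homotopy supplied by $\sQ_1$; in the idempotent-complete homotopy category it therefore splits off a canonical summand, which I define to be $\sL$, with $\rho:\sL\to T_{\sX}^{\mathbb{C}}$ the inclusion of the retract. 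The same computation gives $(\sI_1+i)\circ\pi_-=-\tfrac{i}{2}\,\delta\sQ_1$, so $(\sI_1+i)\circ\rho$ is canonically null-homotopic with null-homotopy $-\tfrac{i}{2}\sQ_1\circ\rho$; this null-homotopy is exactly the $2$-cell $\gamma:\sL\to T_{\sX}[-1]$, and hence the square in the statement commutes by construction. Conjugating, $\pi_+=\tfrac12(\mathrm{id}-i\sI_1)$ splits off the $(+i)$-summand $\bar{\sL}$ with anchor $\bar{\rho}$.

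Next I would verify that $\rho\oplus\bar{\rho}:\sL\oplus\bar{\sL}\to T_{\sX}^{\mathbb{C}}$ is a quasi-isomorphism, the derived analogue of $T^{0,1}\oplus T^{1,0}=T^{\mathbb{C}}$. On cohomology the homotopy in \eqref{Isq} becomes an honest identity $(\sI_1)_*^2=-\mathrm{id}$, so $H^\bullet(T_{\sX}^{\mathbb{C}})$ splits strictly into its $\pm i$-eigenspaces. Since $(\pi_-)_*$ is then a genuine idempotent on cohomology with image the $(-i)$-eigenspace, one gets $H^\bullet(\sL)=H^\bullet_{-i}$ and likewise $H^\bullet(\bar{\sL})=H^\bullet_{+i}$, and $\rho\oplus\bar{\rho}$ realizes the direct-sum inclusion; hence it is an isomorphism on cohomology.

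Finally, and this is where the real work lies, I would equip $\sL$ with the structure of a complex Lie algebroid. The bracket should be inherited from the Lie bracket on the tangent algebroid $T_{\sX}^{\mathbb{C}}$, and the issue is to show that $\sL$ is closed under it up to coherent homotopy. This closedness is precisely the homotopical vanishing of the Nijenhuis torsion of $\sI_1$ recorded by \eqref{IMC}, with $\sI_2$ providing the first correction and the higher terms $\sI_{\geq 3}$, $\sQ_{\geq 2}$ providing the higher coherences, while \eqref{IQcom} is what makes the bracket homotopy and the eigenspace homotopy compatible. I expect the main obstacle to be exactly this step: organizing the towers $\sI=\sum_p\sI_p$ and $\sQ=\sum_p\sQ_p$ into a genuine homotopy-coherent Lie-algebroid (or $L_\infty$-algebroid) structure on the retract $\sL$ — producing all the higher brackets and Jacobi homotopies and checking that they assemble — together with the strictification needed to present the splitting of $\pi_-$ as an honest complex. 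By contrast, the commuting square and the quasi-isomorphism are comparatively formal consequences of \eqref{Isq}; the genuine content of the theorem is the integrability packaged in \eqref{IMC}.
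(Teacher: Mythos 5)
The paper states Theorem \ref{holofoli} without proof (it is an announcement-style statement, accompanied only by the remark that it generalizes the holomorphic Lie algebroid result of \cite{BD}), so there is no proof in the paper to compare against line by line; your proposal has to be judged on its own, and it has a genuine gap. Your leading-order computations are correct and do recover the easy part of the statement: the form-degree-one component of \eqref{Isq} gives $\sI_1^2+\mathrm{id}=-\delta\sQ_1$, hence $\pi_-=\tfrac12(\mathrm{id}+i\sI_1)$ is idempotent up to homotopy, $(\sI_1+i)\circ\pi_-=-\tfrac{i}{2}\,\delta\sQ_1$ yields the $2$-cell $\gamma$, and passing to cohomology gives the quasi-isomorphism $\rho\oplus\bar\rho$. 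But this only produces $\sL$ as a direct summand of $T_{\sX}^{\mathbb{C}}$ in an idempotent-completed homotopy category. By the paper's own definition, an anti-holomorphic foliation is a complex \emph{Lie algebroid} over $\sX$: it must carry a bracket and an anchor with all attendant coherences, and this structure is what later makes ``holomorphic sheaf over $\sX$'' (a module over $\sL$) meaningful. You explicitly defer this step (``this is where the real work lies'') and never carry it out. That deferred step is not a verification to be filled in later; it is the entire mathematical content of the theorem, and it is precisely the part for which the paper points to the Lie theory of multiplicative tensors in \cite{BD}. A construction that yields the commuting square and the quasi-isomorphism, but not the algebroid structure, has not shown that $\sX$ ``admits a canonical anti-holomorphic foliation.''

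There are also concrete problems with the splitting step itself, beyond the missing bracket. You establish idempotency of $\pi_-$ only up to a single, incoherent homotopy, using nothing but $\sI_1$ and $\sQ_1$. Splitting such an idempotent in a homotopy category (say via a telescope) produces an object defined only up to non-unique isomorphism, and typically of infinite rank; this is incompatible both with the theorem's claim that $\sL$ is \emph{canonical} and with the need for $\sL$ to be a geometric object over the atlas $X$ on which Lie algebroid modules can live. Repairing this requires showing the idempotent is coherent, and that is exactly where the higher terms $\sI_{\geq 2},\sQ_{\geq 2}$ and the full equations \eqref{IQcom} and \eqref{IMC} must enter --- your argument names them but never uses them. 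Even the homotopy you write down, $-\tfrac{i}{2}\sQ_1\circ\rho$, silently composes with an unspecified homotopy $\pi_-\circ\rho\simeq\rho$ furnished by the splitting, so $\gamma$ itself is not yet canonical. The route suggested by the paper's citation --- building $\sL$ concretely from the multiplicative $(1,1)$-tensor $\sI_1$ and its correcting homotopies, as one builds $T^{0,1}$ for a holomorphic Lie algebroid --- avoids the abstract splitting entirely and delivers the bracket at the same time; your eigenprojection formalism would serve well as motivation for that construction, but as written it does not substitute for it.
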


This theorem generalizes the result on holomorphic Lie algebroid in \cite{BD}.

Now we are ready to define holomorphic vector bundles over the holomorphic stack. Recall that a holomorphic vector bundle over a complex manifold is equivalent to a Lie algebroid module over $T^{0,1}$.

\begin{definition}
A holomorphic sheaf over $\sX$ is a weak Lie algebroid module over $\sL$.
\end{definition}

\section{Generalized complex manifolds as holomorphic symplectic stacks}\label{GC}

Generalized complex geometry was introduced by Hitchin and was further developed by Gualtieri and Cavalcanti. A generalized complex manifold is a smooth manifold $X$ equipped with a bundle map $\mathbf{J}\in End(TX\oplus T^*X)$ such that $\mathbf{J}^2=-1$, $\mathbf{J}$ preserves the natural pairing of $TX\oplus T^*X$ and $\mathbf{J}$ is integrable under the Courant bracket.

If we write 
$\mathbf{J}=\left(
\ba
-&I  & P    \\
&Q    & ^t\!I
\ea
\right)$,
then $P$ is a Poisson bi-vector by the integrability condition. So we can form a Poisson Lie algebroid $T^*_X \rightarrow X$ with anchor map given by the contraction with $P$. Denote the corresponding quotient stack $[X/T^*_X]$ by $\sX$.

This formal stack $\sX$ can be thought of as a deformation of the total space of the $1$-shifted cotangent bundle. The tangent complex of $\sX$ (pulled back to the atlas $X$) is a two term complex $T^*_X \rightarrow T_X$ concentrated in degrees $-1,0$, and the cotangent complex of $\sX$ is  $T^*_X \rightarrow T_X$ which is concentrated in degrees $0,1$. The tautological identification of these two complexes gives rise to a $1$-shifted symplectic structure.

\begin{proposition}
$\sX$ is naturally equipped with a $1$-shifted symplectic structure. The natural map $X\rightarrow \sX$ is Lagrangian with the Lagrangian structure corresponding to the intrinsic quasi-isomorphism between the relative tangent complex of $X \to \sX$ and the cotangent bundle of $X$.
\end{proposition}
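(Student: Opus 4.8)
The plan is to build the $1$-shifted symplectic form and the Lagrangian structure on $q\colon X\to\sX$ directly on the generalized complex stack $\sX=[X/A]$, where $A=T^*_X$ is the Lie algebroid whose anchor is contraction with $P$ and whose bracket is the one determined by the integrability of $\mathbf{J}$ under the Courant bracket. I stress that this is the Lie algebroid attached to $\mathbf{J}$ itself, so the argument is intrinsic to $\sX$ and does not proceed by importing the bare Poisson example. Since the tangent complex of a quotient stack depends only on its anchor, pulling back to the atlas $X$ gives $\mathds{T}_\sX\cong[\,T^*_X\xrightarrow{P}T_X\,]$ in degrees $-1,0$ and, by duality, $\mathds{T}^\vee_\sX\cong[\,T^*_X\xrightarrow{-P}T_X\,]$ in degrees $0,1$, the sign recording the antisymmetry of the bivector $P$. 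That same antisymmetry promotes to a strict isomorphism of complexes
\[
\omega_0^\flat=(\mathrm{id}_{T^*_X},\,-\mathrm{id}_{T_X})\colon \mathds{T}_\sX \xrightarrow{\ \sim\ }\mathds{T}^\vee_\sX[1],
\]
the tautological identification (the identity on underlying bundles, up to the sign fixed by the antisymmetry of $P$); it is a quasi-isomorphism, which supplies non-degeneracy, and its leading component is $\omega_0=\mathrm{id}$, the canonical pairing in $T^*_X\otimes T_X$, the degree-$1$ part of $\wedge^2\mathds{T}^\vee_\sX$ (symbol $\mathds{1}$).

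Next I would upgrade $\omega_0$ to a closed form $\omega=(\omega_0,\omega_1,\dots)$. One may take $\omega_p=0$ for $p\ge 1$, reducing the closedness equations $d\omega_{p-1}+\delta\omega_p=0$ to the two statements $\delta\omega_0=0$ and $d\omega_0=0$. The de Rham equation $d\omega_0=0$ is the Liouville argument: $\omega_0$ is the de Rham differential of the tautological one-form, hence closed, and this is insensitive to the bracket. The equation $\delta\omega_0=0$ is where $\mathbf{J}$ genuinely enters. It has two contributions: for dual local frames $\{e_i\}$ of $T_X$ and $\{e^i\}$ of $T^*_X$, the piece coming from the internal differential $-P$ of $\mathds{T}^\vee_\sX$ equals $\sum_{j,k}(-P^{kj})\,e_je_k$ with $e_je_k$ the symmetric product in $\mathrm{Sym}^2 T_X$, which vanishes because $P^{kj}$ is antisymmetric; the piece coming from the Chevalley--Nijenhuis (Chevalley--Eilenberg) differential of $A$ --- which is exactly where the bracket induced by $\mathbf{J}$ appears --- vanishes by the Jacobi identity for that bracket, i.e. by the integrability of $\mathbf{J}$. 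This yields the $1$-shifted symplectic structure.

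Then I would exhibit the Lagrangian structure on $q$. Taking the homotopy fiber of $\mathds{T}_X\to q^*\mathds{T}_\sX$ yields the relative tangent complex $\mathds{T}_{X/\sX}\cong[\,T_X\oplus T^*_X\to T_X\,]$ in degrees $0,1$, whose only nonvanishing cohomology is $T^*_X$ in degree $0$; this is the intrinsic quasi-isomorphism named in the statement. Because the degree-one (Lie algebroid) part of $q^*\mathds{T}^\vee_\sX$ restricts to zero on the atlas, the pullback $q^*\omega$ vanishes, so the trivial isotropic datum $\gamma=0$ satisfies $(d+\delta)\gamma=q^*\omega$. Unwinding the construction of $\gamma^\flat$ from the definition of a Lagrangian structure, the induced map $\gamma^\flat\colon\mathds{T}_{X/\sX}\to\mathds{T}^\vee_X=T^*_X$ is precisely the quasi-isomorphism above, so $\gamma$ is Lagrangian with the asserted Lagrangian structure.

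The step I expect to be the main obstacle is the closedness $\delta\omega_0=0$: while the self-duality of the tangent complex and the computation of $\mathds{T}_{X/\sX}$ use only the anchor $P$, the vanishing of the Chevalley--Eilenberg part of $\delta\omega_0$ is exactly the point at which the integrability of $\mathbf{J}$ (equivalently, the Jacobi identity for the bracket it induces on $T^*_X$) must be invoked, and checking the higher coherences inside the Lie-algebroid de Rham complex of $\sX$ in the polydifferential model requires care. Everything else --- non-degeneracy, the relative tangent computation, and the verification that $\gamma^\flat$ is the stated quasi-isomorphism --- I expect to be formal once these identities are recorded.
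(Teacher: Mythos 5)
Your proposal is correct and, despite your framing that it avoids ``importing the bare Poisson example,'' it is essentially the paper's own argument: the only input from $\mathbf{J}$ in this proposition is that $P$ is Poisson (by integrability), and the paper proves the statement precisely by instantiating Example \ref{Poisson is Lag}, where the strict $1$-shifted symplectic structure $(\WWW,0,0,\dots)$ with symbol $\mathds{1}:A\to T^\vee_X$ is placed on the Poisson Lie algebroid quotient and the quotient map carries the trivial Lagrangian structure $\gamma_q=(0,0,\dots)$ with $\gamma_q^\flat=(0,\mathbbm{1}):T_X\oplus A\to T^\vee_X$ a quasi-isomorphism. Your version --- the tautological identification $\mathds{T}_\sX\simeq\mathds{T}^\vee_\sX[1]$, setting $\omega_{p}=0$ for $p\ge 1$, closedness from the antisymmetry of $P$ plus the Jacobi identity, and $\gamma=0$ inducing the same quasi-isomorphism $(0,\mathrm{id})$ on $[\,T_X\oplus T^*_X\xrightarrow{(1,-P)}T_X\,]$ --- is the same computation carried out in a Weil-algebra presentation of the paper's polydifferential model, in which the paper's $\WWW=d_{DR}$ is the corresponding strictly closed representative.
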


In the following, we investigate the geometric structures on $\sX$ induced by the generalized complex structure on $X$.

Alan Weinstein\cite{W} and various other authors suggested that a generalized complex manifold should give rise to a "holomorphic" symplectic groupoid integrating the above Poisson Lie algebroid. In our context, the symplectic groupoid envisioned by A. Weinstein should be a presentation of a geometric $1$-shifted symplectic stack which integrates the canonical $1$-shifted symplectic structure on the formal stack  $\sX$ described in the previous proposition. Using the poly-differential operator formalism of homotopy complex structures that I developed, I was able to confirm a conjecture of Pantev, Pym and Safronov predicting that a  generalized complex structure on $X$ induces a homotopy holomorphic structure on $\sX$ so that the $1$-shifted symplectic structure on $\sX$ becomes holomorphic with respect to this homotopy holomorphic structure. This provides a pleasant explanation of A. Weinstein's putative  ``holomorphic" symplectic structure directly at the level of the Poisson Lie algebroid. The precise  result is summarized in the following theorem. A version of this theorem  is also proven in \cite{PPS} with a different argument which relies on the language of derived foliations. The theorem can also be viewed as a recasting in the language of formal stacks of the Bailey-Gualtiery local structure theorem for generalized complex manifolds \cite{BG}.

\begin{theorem} 
The generalized complex structure on $X$ induces a homotopy holomorphic structure $(\sI,\sQ)$ on $\sX$. Together with the $1$-shifted symplectic structure, $\sX$ becomes a homotopy holomorphic $1$-shifted symplectic stack.
Conversely, if the quotient stack $\sX$ of a real Lie algebroid $A\rightarrow X$ is equipped with a homotopy holomorphic $1$-shifted symplectic structure such that $X\rightarrow \sX$ is Lagrangian, then $A$ is isomorphic to a Poisson Lie algebroid over $X$ and the holomorphic $1$-shifted symplectic structure is induced by a unique generalized complex structure on $X$.
\end{theorem}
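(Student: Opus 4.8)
The plan is to treat the two implications as mutually inverse constructions, arranging matters so that every leading-order identity reduces to the pointwise linear algebra of $\mathbf{J}^2=-1$ together with the pairing-compatibility of $\mathbf{J}$, while every higher-order identity reduces to integrability of $\mathbf{J}$ under the Courant bracket. For the forward direction I would first write down the leading components of $(\sI,\sQ)$ directly from the block decomposition of $\mathbf{J}$. Since $\mathds{T}_\sX=[T^*_X\xrightarrow{P}T_X]$ sits in cohomological degrees $-1,0$, a degree-$0$ endomorphism of $\mathds{T}_\sX$ is a pair of bundle maps on $T^*_X$ and on $T_X$ intertwining the internal differential $P$, whereas a degree-$(-1)$ endomorphism is a single map $T_X\to T^*_X$. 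I would therefore take $\sI_1$ to be the diagonal part of $\mathbf{J}$, acting by $-{}^tI$ on the degree $-1$ summand $T^*_X$ and by $-I$ on the degree $0$ summand $T_X$, and take $\sQ_1=Q$, the lower-left block, as a degree-$(-1)$ endomorphism. The off-diagonal vanishing $IP=P\,{}^tI$ in $\mathbf{J}^2=-1$ makes $\sI_1$ a chain map; the two diagonal relations $I^2+PQ=-1$ and $({}^tI)^2+QP=-1$ give precisely $\tfrac12[\sI_1,\sI_1]_{NR}+[\delta,\sQ_1]=-1$, the leading component of \eqref{Isq} (with $\delta=P$ and $\tfrac12[\sI_1,\sI_1]_{NR}$ computing the composition square $\sI_1^2$); and the remaining relation $QI={}^tI\,Q$ gives $[\sI_1,\sQ_1]_{NR}=0$, the leading component of \eqref{IQcom}.

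Next I would construct the higher terms $\sI_{\ge2},\sQ_{\ge2}$ and verify \eqref{IMC}. Its leading component says that the Nijenhuis torsion of $\sI_1$ is $\delta$-exact with primitive $\sI_2$, and this is where Courant integrability of $\mathbf{J}$ enters: the iterated derivatives of $I,P,Q$ produced by the Courant bracket are exactly the poly-differential operators packaged by $\sI_2,\sQ_2$ and their successors. Using the poly-differential operator model of the previous section I would expand the Fr\"olicher--Nijenhuis and Nijenhuis--Richardson brackets order by order and solve the resulting Maurer--Cartan tower, the obstruction at each stage vanishing by the graded Jacobi identities together with integrability of $\mathbf{J}$. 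Holomorphicity of the $1$-shifted symplectic form $\WWW$ of Example~\ref{Poisson is Lag} then reduces to the statement that $\mathbf{J}$ preserves the canonical pairing on $TX\oplus T^*X$: because $\WWW^\flat$ is the tautological identification $\mathds{T}_\sX\cong\mathds{T}^\vee_\sX[1]$ induced by that pairing, and because pairing-preservation makes the bottom-right block of $\mathbf{J}$ the transpose of its top-left block, the operator $\sI_1$ is self-adjoint for $\WWW$; this is exactly the $(2,0)$-type condition making $\WWW$ holomorphic, and I would upgrade it to the homotopy level by the same order-by-order argument.

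For the converse I would run this backwards. The Lagrangian structure on $X\to\sX$ supplies a quasi-isomorphism $\gamma^\flat:\mathds{T}_{X/\sX}\to T^\vee_X$; since $\mathds{T}_{X/\sX}=[T_X\oplus A\xrightarrow{(1,-a)}T_X]$ (in degrees $0,1$) is quasi-isomorphic to $A$ in degree $0$, this yields a bundle isomorphism $A\cong T^*_X$. Theorem~\ref{Poisson} then identifies the transported Lie algebroid bracket with the Poisson Lie algebroid of a unique Poisson bivector $P$, so that the anchor becomes contraction with $P$. Reading off the blocks of a candidate $\mathbf{J}$, the anchor $P$ supplies the upper-right block, $\sQ_1$ supplies the lower-left block $Q$, and $\sI_1$ supplies the diagonal blocks; holomorphicity of $\WWW$ forces the $T^*_X$-block of $\sI_1$ to be the transpose of its $T_X$-block and forces $P,Q$ to be skew, so that the assembled $\mathbf{J}$ automatically preserves the pairing. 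Conditions \eqref{IQcom} and \eqref{Isq} reassemble into the off-diagonal and diagonal components of $\mathbf{J}^2=-1$, and \eqref{IMC} reassembles into Courant integrability, so $\mathbf{J}$ is a genuine generalized complex structure. Uniqueness is then immediate: every block of $\mathbf{J}$ has been recovered from $(P,\sI_1,\sQ_1)$, and these are canonically determined by the $1$-shifted symplectic and homotopy holomorphic structures up to the contractible space of higher homotopies.

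The hard part is the middle step, namely matching the single first-order nonlinear Courant-integrability condition on $\mathbf{J}$ with the full Fr\"olicher--Nijenhuis Maurer--Cartan equation \eqref{IMC}, and its compatibility \eqref{IQcom} with $\sQ$, beyond leading order. This is precisely where the poly-differential operator formalism is indispensable, since the higher terms $\sI_p,\sQ_p$ are genuine poly-differential operators assembled from iterated derivatives of $I,P,Q$, and one must show that the infinite tower of bracket identities is equivalent to the one integrability condition. I expect the cleanest route is to recognize $(\sI,\sQ)$ as a Maurer--Cartan element of the differential graded Lie algebra governing deformations of the anti-holomorphic foliation of Theorem~\ref{holofoli}, which collapses the order-by-order obstruction analysis to the vanishing of a single class; locally, the Bailey--Gualtieri normal form for $\mathbf{J}$ can be invoked to see that this class vanishes and to pin down the higher homotopies up to contractible choice.
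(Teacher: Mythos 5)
Your leading-order dictionary agrees with what the paper records in its remark following the theorem: $\sI_1$ is the diagonal pair $(-I,\,{}^t\!I)$ acting on the two-term tangent complex $[T^*_X\xrightarrow{P}T_X]$, $\sQ_1=Q$, the chain-map condition is the off-diagonal block of $\mathbf{J}^2=-1$, and the diagonal blocks give the leading component of \eqref{Isq}. However, the strategy you propose for the part you yourself flag as hard --- solving an \emph{infinite} Maurer--Cartan tower order by order, with obstructions killed by integrability --- misses the key structural fact, and this is a genuine gap. Because $\mathds{T}_\sX$ is concentrated in degrees $-1,0$ and $\mathds{L}_\sX$ in degrees $0,1$, the space $\Omega^{p}(T_{\sX})^{d}$ vanishes unless $-1\le d\le p$; hence $\Omega^{p}(T_{\sX})^{1-p}=0$ for $p\ge 3$ and $\Omega^{p}(T_{\sX})^{-p}=0$ for $p\ge 2$. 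So for the Poisson Lie algebroid quotient there is no tower at all: necessarily $\sI=\sI_1+\sI_2$ and $\sQ=\sQ_1$, exactly as the paper's remark states (``higher terms vanish for degree reason''). The actual content of the proof is therefore not obstruction theory but the \emph{explicit} construction of the single correction term $\sI_2$ --- which the paper says is assembled from $Q$, $I$, ${}^t\!I$ and the Courant twisting class --- followed by a finite verification that \eqref{IQcom}, \eqref{IMC}, \eqref{Isq} hold, with Courant integrability of $\mathbf{J}$ entering precisely in those finitely many identities. Your proposal never produces $\sI_2$, and an argument that merely asserts solvability of a tower cannot substitute for it, since here the homotopy data is rigidly constrained rather than freely chosen.

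Two further steps in your plan would not survive scrutiny. First, recognizing $(\sI,\sQ)$ as a Maurer--Cartan element of the DGLA ``governing deformations of the anti-holomorphic foliation of Theorem \ref{holofoli}'' is circular: that foliation is an output of an already-given homotopy holomorphic structure, so it cannot serve as the ambient object whose deformation theory defines $(\sI,\sQ)$. (A non-circular variant would build a complex Lie algebroid directly from the $+i$-eigenbundle of $\mathbf{J}$ and work backwards, but that is a different argument and you would still owe the comparison with the poly-differential data.) Second, invoking the Bailey--Gualtieri normal form to kill obstruction classes inverts the logical order the paper intends: the theorem is presented as a recasting of Bailey--Gualtieri in the language of formal stacks, proved directly in the poly-differential operator formalism; resting the proof on local normal forms would additionally require gluing the locally-defined homotopies, an issue your sketch does not address. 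Your converse direction (recovering $A\cong T^*_X$ from the Lagrangian structure, $P$ from the anchor, and reassembling $\mathbf{J}$ from $(P,\sI_1,\sQ_1)$) is sound in outline and consistent with the paper, though there too the uniqueness claim should be argued from the degree truncation above rather than from an appeal to ``contractible spaces of higher homotopies,'' since those spaces are in fact zero.
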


\

\begin{remark}
Roughly speaking, $\sI_1$ of the homotopy holomorphic structure is given by the diagonal term $I,\,^t\!\!I$
of the generalized complex structure
$\mathbf{J}=\left(
\ba
-&I  & P    \\
&Q    & ^t\!I
\ea
\right)$.
$\sQ_1$ is given by $Q$. $\sI_2$ is related to $Q,I,\,^t\!I$ and the Courant twisting class. In this case, $\sI=\sI_1+\sI_2$, $\sQ=\sQ_1$ and higher terms vanish for degree reason.
\end{remark}

\

\begin{remark}
To define a homotopy holomorphic $1$-shifted symplectic structure, one needs extra homotopy data and compatibility conditions between the homotopy holomorphic and $1$-shifted symplectic structures.
\end{remark}

\begin{remark}
This theorem generalizes a result in \cite{SX} at the level of the Poisson Lie algebroid, which states that the generalized complex structure on $X$ induces an symplectic quasi-Nijenhuis structure on the associated symplectic groupoid.
\end{remark}

The above theorem identifies generalized complex manifolds with homotopy holomorphic $1$-shifted symplectic formal stacks. It provides another natural view point on generalized geometry.

In the what follows, we will consider generalized complex submanifolds (and branes) using this view point. According to Gualtieri \cite{Gu}, a  generalized complex submanifold $(S,F)$ of a generalized complex manifold $X$ is a coisotropic submanifold $S$ with a Courant trivialization $F$, such that the generalized tangent bundle $\tau_{S}^{F}$ is stable under the generalized complex structure $\mathbf{J}$. As a consequence, $\tau_{S}^{F}\otimes \mathbb{C}=L_{S}\oplus \bar{L}_S$ and $L_{S}$ is a complex Lie algebroid. A generalized complex brane is a generalized submanifold equipped with a Lie algebroid module over $L_{S}$ \cite{Gu}.

\begin{lemma}
If $S$ is a coisotropic submanifold of a Poisson manifold $(X,P)$, then the conormal bundle $N^*_{S}$ naturally inherits a Lie algebroid structure. (Recall that the quotient stack $\sX=[X/T^*_X]$ has a canonical 1-shifted symplectic structure.) And the natural map between the quotient stacks $\sS=[S/N^*_{S}] \rightarrow \sX$ is Lagrangian.
\end{lemma}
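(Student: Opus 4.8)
The plan is to treat the two assertions separately: first that $N^*_{S}$ is a Lie subalgebroid of the Poisson Lie algebroid $T^*_X$, and then that the trivial isotropic structure on the natural map $f:\sS\rightarrow\sX$ is Lagrangian, the non-degeneracy being read off from an explicit identification of tangent complexes. Throughout I use the anchor $P:T^*_X\rightarrow T_X$ and the bracket $[\alpha,\beta]=L_{P\alpha}\beta-\iota_{P\beta}d\alpha$ of the Poisson Lie algebroid. The coisotropic hypothesis says exactly that $P(N^*_{S})\subseteq T_S$, so the restriction $P|_{N^*_{S}}:N^*_{S}\rightarrow T_S$ is a well-defined candidate anchor; it remains to see that the bracket preserves sections of $N^*_{S}$.

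For bracket closure I would extend sections of $N^*_{S}$ to $1$-forms $\alpha,\beta$ on $X$ whose restrictions to $S$ annihilate $T_S$, and show $\langle[\alpha,\beta],V\rangle|_S=0$ for every $V\in\Gamma(T_S)$. Writing $\langle L_{P\alpha}\beta,V\rangle=(P\alpha)(\beta(V))-\beta([P\alpha,V])$ and $\langle\iota_{P\beta}d\alpha,V\rangle=d\alpha(P\beta,V)$ and expanding the latter by the invariant formula for $d\alpha$, every resulting term vanishes on $S$: the functions $\beta(V)$, $\alpha(V)$, $\alpha(P\beta)$ vanish along $S$ and are differentiated only along $P\alpha,P\beta,V$, all tangent to $S$ by the coisotropic condition, while the remaining terms contract $\alpha$ or $\beta$ against $[P\alpha,V]$ and $[P\beta,V]$, brackets of vector fields tangent to $S$, hence tangent to $S$. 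A Leibniz-rule argument, using that the anchor of a conormal section lands in $T_S$, reduces the general case to these generators. This makes $(N^*_{S},P|_{N^*_{S}},[-,-])$ a Lie algebroid, defines the formal stack $\sS=[S/N^*_{S}]$, and produces the natural map $f:\sS\rightarrow\sX$ covering $S\hookrightarrow X$.

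For the Lagrangian structure I would pin down the tangent complexes along the atlas $S$. One has $\mathds{T}_\sS|_S=[N^*_{S}\xrightarrow{P}T_S]$ and $f^*\mathds{T}_\sX|_S=[T^*_X|_S\xrightarrow{P}T_X|_S]$, both in degrees $-1,0$, with $df$ the fibrewise inclusions $N^*_{S}\hookrightarrow T^*_X|_S$ and $T_S\hookrightarrow T_X|_S$. As these are fibrewise injective, the fibre computes to $\mathds{T}_{\sS/\sX}\simeq[T^*_S\xrightarrow{\bar P}N_S]$ in degrees $0,1$, where $\bar P$ is induced by $P$ on the quotients $T^*_S=T^*_X|_S/N^*_{S}$ and $N_S=T_X|_S/T_S$. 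Dualizing the anchor gives $\mathds{T}^\vee_\sS=[T^*_S\xrightarrow{(P|_{N^*_{S}})^\vee}N_S]$ in the same degrees, and the antisymmetry of $P$ shows $\bar P=-(P|_{N^*_{S}})^\vee$, so the two complexes coincide termwise, their differentials agreeing up to sign. Next, since $\WWW=(\WWW,0,0,\cdots)$ has $\omega_0=\mathds{1}\in\Gamma(T^*_X\otimes T_X)$, and $f^*$ sends this through the restriction $T^*_X|_S\rightarrow T^*_S$ and the projection $T_X|_S\rightarrow N_S$ to the element of $\mathrm{Hom}(T_S,N_S)$ given by $T_S\hookrightarrow T_X|_S\twoheadrightarrow N_S$, which is zero, we get $f^*\WWW=0$. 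Hence $\gamma=(0,0,\cdots)$ is a legitimate isotropic structure, exactly as in Example \ref{Poisson is Lag}.

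It then remains to identify $\gamma^\flat:\mathds{T}_{\sS/\sX}\rightarrow\mathds{T}^\vee_\sS$ (here $n-1=0$). Feeding $\gamma=0$ into the construction in the definition of a Lagrangian structure, the first null-homotopy is trivial, so $\gamma^\flat$ is minus the null-homotopy coming from the fibre sequence $\mathds{T}_{\sS/\sX}\xrightarrow{\iota}\mathds{T}_\sS\xrightarrow{df}f^*\mathds{T}_\sX$ post-composed with the non-degeneracy isomorphism $f^*\omega_0^\flat$; unwinding this on the two-term complexes yields precisely the termwise identification of $\mathds{T}_{\sS/\sX}$ with $\mathds{T}^\vee_\sS$ found above, which is a quasi-isomorphism, so $f$ is Lagrangian. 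The routine part is the bracket closure; the genuine obstacle is this last step, namely checking that the abstract recipe for $\gamma^\flat$ really reproduces the termwise identification, i.e. that the homotopies bookkeeping the non-degeneracy of $\WWW$ and the fibre sequence match up with the correct signs. This is the exact analogue of the quasi-isomorphism check carried out for $\gamma_q^\flat$ in Example \ref{Poisson is Lag}, and I expect to handle it by the same explicit comparison of the two-term complexes.
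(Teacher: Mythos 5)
Your proposal is correct and takes essentially the paper's own approach: the paper states this lemma without proof, but its model computation in Example~\ref{Poisson is Lag} (trivial isotropic structure, explicit two-term tangent complexes, termwise quasi-isomorphism check for the induced map) is exactly what you generalize from the atlas map $q:X\rightarrow\sX$ to $\sS\rightarrow\sX$. The step you flag as the ``genuine obstacle'' does go through as you expect: with $\gamma=0$ the first null-homotopy vanishes, while the canonical null-homotopy of $df\circ\iota$ post-composed with $df^\vee\circ f^*\omega_0^\flat$ unwinds, on the explicit fiber model of $\mathds{T}_{\sS/\sX}$, to precisely your termwise projection onto $[T^*_S\rightarrow N_S]$, so $\gamma^\flat$ is a quasi-isomorphism (the sign discrepancy $\bar P=-(P|_{N^*_S})^\vee$ being harmless).
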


The next theorem identifies a generalized complex submanifold $S$ with a holomorphic Lagrangian substack of $\sX$.
Again a version of this theorem with a different more abstract proof can be found in \cite{PPS}. 

\begin{theorem}
The generalized complex submanifold structure on $S$ induces a homotopy holomorphic structure on $\sS$ and an isotropic structure such that $\sS \rightarrow \sX$ is a holomorphic Lagrangian map. 
Conversely, If $\sS$ is the quotient stack of a Lie algebroid, and $\sS \rightarrow \sX$ is a holomorphic Lagrangian map. Then $\sS$ is induced from a generalized complex submanifold.
\end{theorem}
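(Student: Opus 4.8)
The plan is to reduce both directions to eigenbundle bookkeeping for $\mathbf{J}$, carried out in the relative setting over $S$ in exact parallel to the absolute statement for $X$ in the preceding theorem, and to feed the Lagrangian structure produced by the preceding lemma into a holomorphicity comparison. First I would make explicit the underlying graded bundle of the tangent complex $\mathds{T}_{\sS}$ of $\sS=[S/N^*_S]$: it is $N^*_S \to T_S$ in degrees $-1,0$, with differential the anchor of the conormal Lie algebroid, i.e. contraction with $P$ restricted to $N^*_S$ (which lands in $T_S$ precisely by coisotropy, as in the preceding lemma). I would then identify this graded bundle with the generalized tangent bundle $\tau_S^F$: the short exact sequence $0 \to N^*_S \to \tau_S^F \to T_S \to 0$ together with the Courant trivialization $F$ gives a splitting $\tau_S^F \cong T_S \oplus N^*_S$ matching the grading ($T_S$ in degree $0$, $N^*_S$ in degree $-1$).

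For the forward direction, since $(S,F)$ is a generalized complex submanifold, $\tau_S^F$ is $\mathbf{J}$-stable, so I would restrict $\mathbf{J}$ and decompose $\mathbf{J}|_{\tau_S^F}$ by cohomological degree exactly as for $X$: the degree-$0$ part (the diagonal $I,{}^tI$ blocks) gives $\sI_{S,1}\in\mathrm{End}(\mathds{T}_{\sS})$, the degree-$(-1)$ part (the $Q$ block) gives $\sQ_{S,1}$, and the degree-$(+1)$ part recovers the differential. The identities $\mathbf{J}^2=-1$ and Courant-integrability of $\tau_S^F$ should then yield the Maurer--Cartan and bracket relations \eqref{IQcom}, \eqref{IMC}, \eqref{Isq}, with a single higher term $\sI_{S,2}$ coming from the Courant bracket and twisting class and all further terms vanishing for degree reasons. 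Crucially, this computation is literally the restriction to $S$ of the one on $X$: the inclusion $\tau_S^F \hookrightarrow (TX\oplus T^*X)|_S$ intertwines $\mathbf{J}|_{\tau_S^F}$ with $\mathbf{J}$, which is exactly the assertion that $\sS \to \sX$ is holomorphic and, via Theorem \ref{holofoli}, carries the anti-holomorphic foliation $L_S$ into that of $\sX$. Feeding in the Lagrangian structure from the preceding lemma and checking its compatibility with the $\sI$'s in each degree then upgrades it to a holomorphic Lagrangian map.

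For the converse, given a holomorphic Lagrangian $\sS=[S'/A'] \to \sX$, I would first forget the holomorphic structure: unwinding the Lagrangian nondegeneracy $\gamma^\flat:\mathds{T}_{\sS/\sX}\xrightarrow{\sim}\mathds{T}^\vee_{\sS}$ against the known tangent complex $T^*_X \to T_X$ of $\sX$ (equivalently, invoking the characterization of coisotropic maps in Theorem \ref{coisotropic}) forces $S'\hookrightarrow X$ to be a coisotropic embedding with $A'\cong N^*_{S'}$ as Lie algebroids, so $\sS$ agrees with the quotient of the preceding lemma. Then the homotopy holomorphic structure on $\sS$ produces, by Theorem \ref{holofoli}, an anti-holomorphic foliation $L_S$, and holomorphicity of the map places $L_S\oplus\bar L_S$ inside $(TX\oplus T^*X)|_{S'}$ as a $\mathbf{J}$-stable subbundle; reading off the degree-$(-1)$ component of the splitting recovers the Courant trivialization $F$. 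This exhibits $S'$ as a generalized complex submanifold inducing the given data.

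The hard part will be the homotopy bookkeeping in the forward direction: translating Courant-integrability and the twisting class of $\tau_S^F$ into the Fr\"olicher--Nijenhuis and Nijenhuis--Richardson identities so that $\sI_{S,2}$ solves \eqref{IMC} and \eqref{IQcom}, \eqref{Isq} hold with the correct signs, and then pinning down the precise sense in which the Lagrangian structure is \emph{holomorphic}, i.e. matching the isotropic data against $\sI_S$ and $\sI_X$ in every cohomological degree. The degree truncation ($\sQ_S=\sQ_{S,1}$, $\sI_S=\sI_{S,1}+\sI_{S,2}$) keeps everything finite, but reconciling the sign and twisting conventions between the $S$- and $X$-computations is the delicate point; getting the converse's first step to genuinely produce an \emph{embedding}, rather than merely an immersion, out of the Lagrangian nondegeneracy is the other place I would expect to have to work.
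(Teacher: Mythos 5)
First, a caveat on the comparison itself: the paper states this theorem \emph{without} proof --- it only records that a version ``with a different more abstract proof'' appears in \cite{PPS} --- so your proposal can only be measured against the paper's evident intended strategy, namely the polydifferential-operator formalism and the block decomposition of $\mathbf{J}$ spelled out in the remark after the absolute theorem ($\sI_1$ from the diagonal blocks $I,{}^t\!I$, $\sQ_1$ from $Q$, one correction $\sI_2$ tied to the twisting, truncation for degree reasons). Your forward direction follows that strategy in outline, but it contains one incorrect claim and defers the actual content. The incorrect claim: the Courant trivialization $F$ does \emph{not} split the sequence $0 \to N^*_S \to \tau_S^F \to T_S \to 0$; $F$ is what defines $\tau_S^F$, and a splitting $V \mapsto V + \xi_V$ requires extending $\iota_V F \in T^*S$ to $T^*X|_S$, i.e. an auxiliary choice of complement (the set of splittings is a torsor over $\mathrm{Hom}(T_S,N^*_S)$). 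Consequently your key assertion that the computation ``is literally the restriction to $S$ of the one on $X$'' fails whenever $F \neq 0$: the derivative of $\sS \to \sX$ is the degreewise inclusion $T_S \oplus N^*_S \hookrightarrow (T_X \oplus T^*_X)|_S$, which differs from the inclusion $\tau_S^F \hookrightarrow (TX \oplus T^*X)|_S$ by the shear $V \mapsto V + \iota_V F$. Holomorphicity of $\sS \to \sX$ is therefore not tautological; it can only hold up to homotopy data in which $F$, the choice of splitting, and the Courant twisting all enter, and producing that data and verifying \eqref{IQcom}, \eqref{IMC}, \eqref{Isq} for $(\sI_S,\sQ_S)$, together with the (unstated in the paper) compatibilities defining ``holomorphic Lagrangian,'' \emph{is} the theorem. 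You have labeled exactly this ``the hard part'' and deferred it, so what remains is a plan rather than a proof.

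The converse contains a step that is simply false. You propose to ``first forget the holomorphic structure'' and extract a coisotropic embedding from Lagrangian nondegeneracy alone, or equivalently from Theorem \ref{coisotropic}. But a $1$-shifted Lagrangian map to $\sX$ need not come from a coisotropic submanifold: take $X$ symplectic with $P = \omega^{-1}$, let $S' = \{x\}$ be a point with the zero Lie algebroid, and consider $pt \to X \to \sX$ with the zero isotropic structure (forms of form degree $\geq 2$ on a point vanish, and the $1$-shifted symplectic form already pulls back to zero on the atlas $X$). Then $\mathds{T}_{pt/\sX} \simeq [\,T^*_xX \xrightarrow{\;\omega^{-1}\;} T_xX\,]$ is acyclic while $\mathds{T}^\vee_{pt} = 0$, so the map is honestly Lagrangian; yet a point is not coisotropic in a positive-dimensional symplectic manifold, and indeed $\tau_{pt}^0 = T^*_xX$ is not $\mathbf{J}_\omega$-stable, so $pt$ is not a generalized complex submanifold. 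Nor can Theorem \ref{coisotropic} be invoked as an equivalent formulation: its characterization requires, beyond Lagrangianness of $\sS \to \sX$, the additional condition that $S' \to \sS \times_\sX X$ be $0$-shifted Lagrangian, which is not among the hypotheses of the converse. The moral is that coisotropy of $S'$ must be deduced \emph{using} the homotopy holomorphic structure (for instance through the interaction of $\sQ$ and the anti-holomorphic foliation of Theorem \ref{holofoli} with the Lagrangian structure), not after discarding it; as written, the first step of your converse, and everything downstream of it, does not go through.
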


By theorem \ref{holofoli}, the holomorphic substack $\sS$ has an anti-holomorphic foliation $\sL_S$.

\begin{corollary}
A generalized complex brane $(S,F,V)$ induces a holomorphic substack $\sS \rightarrow \sX$ equipped with a holomorphic sheaf $\mathcal{V}$ on $\sS$.
\end{corollary}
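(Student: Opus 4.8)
The plan is to observe that the only ingredient missing beyond the preceding theorem is the sheaf $\mathcal{V}$ itself, and to produce it by transporting the brane datum $V$ along a comparison between the anti-holomorphic foliation $\sL_S$ and the complex Lie algebroid $L_S$. By the preceding theorem the submanifold $(S,F)$ already yields a homotopy holomorphic structure on $\sS$ together with the holomorphic Lagrangian map $\sS\to\sX$, so Theorem \ref{holofoli} supplies the anti-holomorphic foliation $\sL_S$ with anchor $\rho$ and homotopy $\gamma$. Since a holomorphic sheaf on $\sS$ is \emph{by definition} a weak Lie algebroid module over $\sL_S$, it suffices to manufacture such a module out of the $L_S$-module $V$.

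First I would make the relation between $\sL_S$ and $L_S$ explicit on the atlas $S$. Pulled back to $S$, the complexified tangent complex of $\sS=[S/N^*_S]$ is the two-term complex $N^*_S\otimes\mathbb{C}\to T_S\otimes\mathbb{C}$ with differential the Poisson anchor, while the brane's algebroid sits inside the generalized tangent bundle as $\tau_S^F\otimes\mathbb{C}=L_S\oplus\bar L_S\subset(T_S\oplus N^*_S)\otimes\mathbb{C}$. The two projections of $L_S$ onto these summands, combined with the Poisson anchor $a:N^*_S\to T_S$, assemble into a map
\begin{equation*}
\rho: L_S\longrightarrow\bigl(N^*_S\otimes\mathbb{C}\to T_S\otimes\mathbb{C}\bigr)=T_\sS^\mathbb{C},
\end{equation*}
and I would check that this realizes $L_S$ as a model for the anchored algebroid $\sL_S$ produced by Theorem \ref{holofoli}, with $\gamma$ matching the recorded homotopy between $\sI_1\rho$ and $-i\rho$. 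The decomposition $\tau_S^F\otimes\mathbb{C}=L_S\oplus\bar L_S$ then translates directly into the quasi-isomorphism $\rho\oplus\bar\rho$ demanded of an anti-holomorphic foliation, identifying $L_S$, as a Lie algebroid over $S$ together with its map to $T_\sS^\mathbb{C}$, with $\sL_S$.

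Granting this identification, the construction of $\mathcal{V}$ is immediate: the flat $L_S$-connection defining the brane becomes a flat $\sL_S$-connection, while the $N^*_S$-equivariance that turns $V$ into a sheaf on the stack $\sS$ is already part of the $L_S$-action, since $N^*_S$ is contained in $\tau_S^F$. The higher coherence data required for a \emph{weak} module is controlled because, exactly as for $\sX$ in the remark above, the higher components of $(\sI,\sQ)$ vanish on $\sS$ for degree reasons, so the putative weak module structure is in fact strict.

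I expect the main obstacle to be the explicit identification of $\sL_S$ in the second step: one must verify that the Lie bracket, anchor, and homotopy $\gamma$ produced abstractly from $(\sI,\sQ)$ by Theorem \ref{holofoli} agree with the $L_S$-data coming from $\mathbf{J}$ stabilizing $\tau_S^F$, and in particular that the bracket on $L_S$ is compatible with the conormal $N^*_S$-directions it absorbs. Once the anchor and bracket are matched, the transport of modules and the verification of coherence are routine.
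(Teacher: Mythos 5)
The paper gives no proof of this corollary: it is meant to follow at once from the preceding theorem (which produces the holomorphic Lagrangian substack $\sS\to\sX$), from Theorem \ref{holofoli} (which produces $\sL_S$), and from the definition of a holomorphic sheaf as a weak $\sL_S$-module, by transporting the brane module $V$ --- which is exactly the route you take, and your closing paragraph does contain the correct transport mechanism. However, the pivotal middle step of your argument, the identification of $\sL_S$ with $L_S$ itself, is false as stated.

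You take $\sL_S$ to be $L_S$ concentrated in degree $0$, anchored by projection, and assert that the splitting $\tau^F_S\otimes\mathbb{C}=L_S\oplus\bar{L}_S$ ``translates directly'' into the quasi-isomorphism $\rho\oplus\bar{\rho}\colon \sL_S\oplus\bar{\sL}_S\to T^{\mathbb{C}}_{\sS}$. Test this on the atlas (pullback along $S\to\sS$ detects quasi-isomorphisms of Lie algebroid modules): the source is $L_S\oplus\bar{L}_S\cong\tau^F_S\otimes\mathbb{C}$, a bundle of rank $\dim X$ in degree $0$, while the target is the two-term complex $N^{*\mathbb{C}}_S\to T^{\mathbb{C}}_S$ in degrees $-1,0$, of virtual rank $\dim S-\operatorname{codim} S$. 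Quasi-isomorphic bounded complexes of vector bundles have equal virtual rank (at a stalk, an exact bounded complex of finite free modules over a local ring splits), so your map can be a quasi-isomorphism only when $\operatorname{codim} S=0$, i.e.\ for space-filling branes. What is true --- and what the paper's symplectic example records when it calls the foliation on $\sS$ ``the \emph{descent} of the complexified tangent algebroid of $S$'' rather than that algebroid itself --- is that $\sL_S$ is the descent of $L_S$ along $S\to\sS$: pulled back to the atlas it is the two-term complex $N^{*\mathbb{C}}_S\to L_S$, where the conormal directions map in via the inclusion $N^{*\mathbb{C}}_S\subset\tau^F_S\otimes\mathbb{C}$ followed by projection along $\bar{L}_S$. (Its virtual rank is $\tfrac{1}{2}\dim X-\operatorname{codim} S$, and doubling it recovers $\dim S-\operatorname{codim} S$, as it must.) With this correction your final paragraph goes through, and indeed it already uses the corrected picture implicitly: restricting the $L_S$-action on $V$ along $N^{*\mathbb{C}}_S\to L_S$ is precisely the flat conormal connection, i.e.\ the descent datum defining the sheaf $\mathcal{V}$ on $\sS$, while the full $L_S$-action descends to the $\sL_S$-module structure. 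A smaller point: your strictness claim misquotes the paper's remark --- $\sI_2$ does \emph{not} vanish in general (it encodes the Courant twisting class); only $\sI_{\geq 3}$ and $\sQ_{\geq 2}$ do, so whatever strictness you need should be extracted from the strict quotient presentation of $\sL_S$, not from vanishing of components of $(\sI,\sQ)$.
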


In conclusion, we turned generalized complex manifold into holomorphic 1-shifted symplectic stack, and turned generalized complex branes into holomorphic Lagrangian substacks equipped with holomorphic sheaves. This framework provides a new view point on the potential construction of the ``Fukaya category" of a generalized complex manifold as an analogue of derived category of coherent sheaves on a holomorphic stack.

\begin{example}
Let $(X,\omega)$ be a symplectic manifold. Then the associated generalized complex structure is $\mathbf{J_\omega}=\left(
\ba
&0  & \omega^{-1}    \\
&-\omega    & 0
\ea
\right)$. $\omega^{-1}$ is a Poisson bivector and induces the Poisson Lie algebroid structure  $A\rightarrow X$. Let $\sX=[X/A]$ denote the formal quotient of $X$ by its Poisson Lie algebroid. The homotopy holomorphic structure induced by $\mathbf{J_\omega}$ specilizes to $\sI=0$, $\sQ=\sQ_1=-\omega$. And the anti-holomorphic  foliation \ref{holofoli} on $\sX$ is the descent of the complexified tangent algebroid of $X$. 
A Lagrangian submanifold $S$ of $X$ induces the substack $\sS$ of the formal quotient of $S$ by its tangent algebroid. The homotopy holomorphic structure on $\sS$ is $\sI=0$, $\sQ=\sQ_1=\mathbf{1}_{T_S}$. The anti-holomorphic foliation on $\sS$ is again the descent of the complexified tangent algebroid of $S$. 
A space filling generalized complex submanifold $C=X$ of $X$ induces the substack $\mathcal{C}$ of the formal quotient of $C$ by the trivial Lie algebroid. So $\mathcal{C}$ is indeed $C$ itself. The homotopy holomorphic structure on $\mathcal{C}$ is an ordinary holomorphic structure on $C$ induced by the generalized complex submanifold structure. 
\end{example}

\section{Jet bundle and Atiyah algebroid}

Let $E\rightarrow X$ be a vector bundle, $J^1E={pr_1}_*(\mathcal{O}_{X\times X}^{(1)}\otimes pr_2^*E)$ be its $1$-jet bundle. A local section of $J^1E$ can be represented by $g(x,y)\in \Gamma(X\times X,pr_2^*E)$. The local section of $J^1E$ represented by $g(x,y)$ should be thought of as the first order Taylor expansion in $y$ variable centered at $x$, i.e. $g(x,x)+D_yg(x,x)$.  We have a short exact sequence
\begin{equation}
    0\rightarrow T^\vee \otimes E \rightarrow J^1E \rightarrow E\rightarrow 0
\end{equation}
The first map above is given by $df\otimes s \mapsto g(x,y)=(f(y)-f(x))s(x)$.
If we apply $Hom(-,E)$ to the above SES, we get
\begin{equation}
    0\rightarrow Hom(E,E)\rightarrow Diff^1(E,E)\xrightarrow{symbol} T\otimes Hom(E,E)\rightarrow 0
\end{equation}
Here $Diff^1(E,E)=Hom(J^1E,E)$ is the bundle of first order differential operators on $E$ , whose symbol lies in $T\otimes Hom(E,E)$.
Atiyah algebroid $At(E)$ is the bundle of differential operators on $E$ with symbol land in $T\otimes Id$. $At(E)$ is equipped with a Lie bracket given by the commutator of differential operators. Then we Have the SES for Atiyah algebroid
\begin{equation}
    0\rightarrow Hom(E,E)\rightarrow At(E)\xrightarrow{symbol} T\rightarrow 0.
\end{equation}
A more general approach to introduce the Atiyah algebroid is through $G$-principle bundle. Let $P\rightarrow X$ be a $G$-principle bundle with right $G$ action.

\section{Further remark}

\begin{enumerate}
    \item The definition of a homotopy holomorphic structure readily generalizes to the quotient stack of a real $L_\infty$ algebroid and possibly more general real derived stacks. I would expect a result like theorem \ref{holofoli} to hold in the general situation. With such a theorem, one can define the holomorphic sheaves on homotopy holomorphic derived stacks. 
    
    \item Generalized complex geometry is also called generalized geometry of type $D_n$. Besides $D_n$ geometry, Hitchin's school has also developed generalized geometries of type $B_n, C_n$. My expectation is that $B_n$ geometry can be translated to homotopy holomorphic 1-shifted contact stack, and $C_n$ geometry to homotopy holomorphic 1-shifted Riemannian stacks. In addition, there is a T-duality expected between $B_n$ and $C_n$ geometry, which is closely  related to Langlands duality between $B_n$ and $C_n$ type Lie groups.
    
    \item One motivation for generalized complex geometry is the mirror symmetry between the symplectic and complex geometry. It is expected that mirror symmetry could be treated in the greater context of generalized complex geometry. People want to define a ``Fukaya category" for a generalized complex geometry which unifies the Fukaya category of a symplectic manifold and the derived category of coherent sheaves of a complex manifold. The framework of homotopy holomorphic 1-shifted symplectic stack potentially leads to such a definition. 
    
    \item In usual complex geometry, we have the celebrated Newlander-Nirenberg theorem which asserts that a holomorphic manifold admits an atlas of holomorphic charts. An abstract analogue of this theorem using universal action of an appropriate operad is proven in \cite{PPS}. It would be useful if our explicit poly-differential operator formalism can be refined to produce holomorphic charts of a homotopy holomorphic stack by establishing a theorem analogous to Newlander-Nirenberg theorem. For the stacks arising from generalized complex manifolds, this is a consequence of the powerful local structure theorem for generalized complex manifolds of Bailey and Gualtieri \cite{BG}. For general derived stacks, the explicit version of the Newlander-Nirenberg theorem will likely require a Spencer cohomology analysis of higher obstructions to integrablity. 
    
\end{enumerate}

\bibliographystyle{amsplain}

\end{document}